\documentclass{amsart}
\usepackage{amssymb,amsfonts,amsmath}
\usepackage{color}
\usepackage{graphicx}

\copyrightinfo{2010}{American Mathematical Society}
\newtheorem{theorem}{Theorem}[section]
\newtheorem{lemma}[theorem]{Lemma}
\newtheorem{corollary}[theorem]{Corollary}
\newtheorem{proposition}[theorem]{Proposition}

\theoremstyle{definition}
\newtheorem{definition}[theorem]{Definition}
\newtheorem{example}[theorem]{Example}
\newtheorem{problem}[theorem]{Problem}

\theoremstyle{remark}

\numberwithin{equation}{section}

\begin{document}

\title[On uniqueness theorems ]
      {On uniqueness theorems for the inverse problem of Electrocardiography in the 
			Sobolev spaces} 
			
			\author[V. Kalinin]{Vitaly Kalinin}
\address{EP Solutions SA, 
Avenue des Sciences 13, 1400 Yverdon-les-Bains, Switzerland}

\email{vitaly.kalinin@ep-solutions.ch}
		
\author[A. Shlapunov]{Alexander Shlapunov}
\address{Siberian Federal University,
         Institute of Mathematics and Computer Science,
         pr. Svobodnyi 79,
         660041 Krasnoyarsk,
         Russia}		

\email{ashlapunov@sfu-kras.ru}

\author[K. Ushenin]{Konstantin Ushenin}
\address{EP Solutions SA, 
	Avenue des Sciences 13, 1400 Yverdon-les-Bains, Switzerland}

\email{contact@ep-solutions.ch}

\subjclass [2010] {Primary 35J56; Secondary 35J57, 35K40}

\keywords{transmission problems for partial differential equations, 
inverse problem of electrocardiography, 
bidomain model of the heart}

\begin{abstract}
We consider a mathematical model related to reconstruction of cardiac electrical activity 
from ECG measurements on the body surface. An application of recent developments in solving  
boundary value problems for elliptic  and parabolic equations in Sobolev type  spaces 
allows us to obtain uniqueness theorems for the model. The obtained results can be used as 
a sound basis for creating numerical methods for non-invasive mapping of the heart.
\end{abstract}

\maketitle

\section*{Introduction}
\label{s.0}

The inverse problem of electrocardiography, i.e. the problem of reconstruction of cardiac 
electrical activity from ECG measurements on the body surface is of great value for 
diagnostics and treatment of cardiac arrhythmias \cite{Cluitmans2018}. The inverse 
electrocardiography problem can be considered in various statements. In this paper we focused 
on the inverse problem of reconstruction of cardiac electrical activity inside the myocardium. 

Electrophysiological processes in the myocardium are most often described using the so-called
bidomain model, see \cite{1}, \cite{geselowitz1983}, \cite{2} and elsewhere.

The bidomain model can be presented in the form of two non-linear parabolic partial differential 
equation of reaction-diffusion type or in the form of a linear elliptic equation of the second 
order and a non-linear reaction-diffusion equation. The reaction term of the parabolic 
equations characterizes transmembrane ionic currents through the potential-sensitive ion 
channels. It is described by a set of ordinary non-linear differential equations (also 
referred to as the ionic model) or, in the simplest case, a nonlinear “activation”{} function. 
The reaction term can also include an external electrical current, that allows describing the 
processes of electrical stimulation of the heart. The bidomain model can be complemented with 
the Laplace equation for the electric field potential outside the myocardium, boundary 
conditions on the torso surface, and interface conditions at the boundary of 
the myocardial domain (the bidomain-bath model).  

The bidomain model was initially proposed in the late 60-70s \cite{Schmidt1969}, 
\cite{Tung1978}, \cite{Miller1983}. Formal derivations of the model equations and the 
boundary conditions with different levels of mathematical rigor were obtained later 
\cite{Neu1993}, \cite{Krassowska1994}, \cite{Pennacchio2005}, \cite{Hand2009}, 
\cite{Collin2018}, \cite{Grandelius2019}, \cite{Bendahmane2019}. The bidomain model is widely 
used for simulation the propagation of the myocardial excitation, which consists in the numerical 
solving the initial-boundary value problem for the 
corresponding  equations \cite{Clayton2010}, \cite{Trayanova2011}, \cite{Beheshti2016}, 
\cite{Qarteroni2017}, \cite{Potse2018}. This initial-boundary value problem was extensively 
studied theoretically. Positive results on the existence, uniqueness, and regularity of the 
"weak"{} and "strong"{} solution to this problem for several versions of the "ionic model"{} in
the framework of the suitable functional spaces were obtained in \cite{Colli-Franzone2002}, 
\cite{Bendahmane2006}, \cite{Bourgault2009}, \cite{Veneroni2009}, \cite{Kunisch2_2013}, 
\cite{Kunisch3_2013}, \cite{Pargaei2019}.

The bidomain model is widely accepted as an accurate model for the cardiac electrical activity 
\cite{1}. Therefore, it seems reasonable to formulate the problem of reconstructing the 
electrical activity inside the myocardium as an inverse problem for the bidomain equations. 

This problem can be attributed to the class of so-called interface problems for partial 
differential equations. However, such problem are expected to be essentially ill-posed unlike 
the classical well-posed transmission problems in the theory of elliptic boundary value
problems.

In most works, only the linear elliptic equations of the bidomain--bath model were used for 
formulation such inverse problem \cite{Nielsen2007}. Inverse problems of this class were 
investigated numerically in a series of works, see, for instance {\cite{He2000}, 
\cite{Liu2006}, \cite{Wang2010}, \cite{Yu2015}, in which various constraints were imposed on 
the solution in order to obtain the uniqueness of the numerical solution and the stability of 
the computational procedure. The authors were able to demonstrate the feasibility of a 
plausible-looking reconstruction of electrical activity inside the myocardium. However, from 
the applied point of view, the physiological adequacy of the solution obtained by this method 
strongly depends on the accuracy of the a priori approximation of the solution.

In contrast to the initial-boundary value problem, the inverse problems for the bidomain mode 
are not sufficiently studied. Recently, theoretical investigations of the inverse problem led 
to interesting results about non-uniqueness and existence of its solutions in Hardy type 
spaces, were presented in see \cite{2K}. However, the results were obtained under the 
following very restrictive assumptions: all the elliptic operators involved in the model 
should be proportional.

Previously, Burger at al. \cite{1} considered the possibility of solving the inverse problem 
for the steady (elliptic) part of the bidomain model using a priori information about the 
desired solution. More precisely, they formulated the inverse problem (in terms of the 
transmembrane potential) as a problem of minimizing the norm of the difference between the 
desired and a priori solution, provided that the solution satisfies the elliptic equations 
and the boundary conditions. The authors proved the uniqueness theorem for solving the 
inverse problem in this statement. This result is considered as a theoretical justification 
for the above-mentioned numerical methods. 

The inverse problem for the complete bidomain model in the form of two reaction-diffusion 
equations were studied in \cite{Ainseba2015}. This inverse problem provides a reconstruction 
of the electrical activity inside the myocardium in a special case, when the heart is 
activated by electrical stimulation subject to known initial conditions, for example, under 
the initial conditions which the myocardium has at rest. The inverse problem was stated as a 
problem of identification of the electrical stimulation current by known electrical potential 
on the body surface in the form of an optimal control problem. Using the simple two-variable 
Mitchell-Schaeffer ionic model, authors obtained results on the existence of the solution to 
this problem. 

The present article is devoted to the study of the uniqueness of the solution to the problem 
of reconstruction of the electrical activity of the heart inside the myocardium for the 
bidomain model without strict restrictions (in the physiological sense) on the  cardiac activation patterns.
We aimed to describe conditions, providing uniqueness the theorem for both 
steady and evolutionary versions of the bidomain model in an essentially general situation 
involving both elliptic and parabolic differential operators. In this study we used a 
simplified linear version of the reaction term of the parabolic equation of the bidomain 
model. The linear assumption on the reaction part was utilized in recent papers 
\cite{Aniseba2_2015}, \cite{Wu2018} for studying the inverse problem of reconstruction of 
electrical conductivities for the bidomain model by the Carleman estimate technique. 
Despite the fact that the linear activation function is a significant simplification from a 
physiological point of view, the results of the analysis of the linear version of the 
bidomain model can serve as a starting point for the study of more complex and 
physiologically adequate models.

In our investigation we use 
developments related to the ill-posed Cauchy problem for elliptic 
equations, see \cite{KMF91},  \cite{Lv1}, \cite{Tark36}, to the Dirichlet problem and the Neumann 
problem for strongly elliptic operators possessing the Fredholm property, see, for instance, 
\cite{GiTru83}, \cite{McL00}, \cite{Mikh}, \cite{Roit96}, \cite{Simanca1987}, 
and to the non-standard Cauchy problem for parabolic equations, see \cite{KuSh}, 
\cite{PuSh}, regarding the bidomain model as a transmission problem, see, 
for instance, \cite{Bor10} (cf. also \cite{Shef} for more general models).
The results of sections \S \ref{s.bidomain}, 
\S \ref{s.bidiomain.t} belong to V. Kalinin and A. Shlapunov, the numerical 
part in \S \ref{s.numerical} is due to V. Kalinin and K. Ushenin. 

\section{Mathematical preliminaries}
\label{s.math.prelim}

Let $\theta$ be a measurable set in ${\mathbb R}^n$, $n\geq 2$.
Denote by $L^2(\theta)$ a Lebesgue space of functions on  $\theta$ with the 
standard inner product $(\cdot, \cdot)_{L^2 (D)}$.
%\begin{equation*}
%\left( u,v \right)_{L^2(\theta)} = \int_{\theta} v (x)u (x)\ dx .
%\end{equation*}
If $D$ is a domain in ${\mathbb R}^n$ with a piecewise smooth boundary
$\partial D$, then for $s \in \mathbb{N}$ we denote by $H^s(D)$ 
the standard Sobolev space with the standard inner product $( \cdot, \cdot )_{H^s(D)}$.
%\begin{equation*}
%\left( u,v \right)_{H^s(D)} = \int_{D}\sum_{|\alpha| \le s} 
%(\partial^{\alpha}v) (\partial^{\alpha}u)dx .
%\end{equation*}
It is well-known that this scale extends for all $s>0$. More precisely, 
given any non-integer $s \in  {\mathbb R}_+ \setminus {\mathbb Z}_+ $, we use 
the so-called Sobolev-Slobodetskii space $H^s (D)$, 
%is defined to be the completion of $C^{\infty} (\overline{D})$ with respect to the norm
%$$
%   \| u \|_{H^s (D)}
% = \Big( \| u \|^2_{H^{[s]} (D)}
%       + \int \!\!\! \int_{D \times D}
%         \sum_{|\alpha | = [s]}
%         \frac{|\partial^\alpha u (x) - \partial^\alpha u (y)|^2}{|x - y|^{n+2(s-[s])}}\,
%         dx dy
%   \Big)^{1/2},
%$$
%where $[s]$ is the integer part of $s$, 
see \cite{Slob58}.

Denote by $H^s_0 (D)$ the closure of the subspace $C^{\infty}_{\mathrm{comp}} (D)$ in 
$H^{s} (D)$, where $C^{\infty}_{\mathrm{comp}} (D)$ is the linear space of functions with 
compact supports in  $D$. Then the scale of Sobolev spaces can be extended for negative \textcolor{red}{smoothness indexes, too.}  
%Namely, for $s > 0$, denote by $H^{-s} (D)$ the completion of
%$C^{\infty} (\overline{D})$ with respect to the norm
%$$
%   \| u \|_{H^{-s} (D)}
% = \sup_{\substack{v \in C^{\infty}_{\mathrm{comp}} (D) \\ v \ne 0}}
%   \frac{|(v,u)_{L^2 (D)}|}{\| v \|_{H^{s} (D)}}.
%$$
%Actually, 
Namely, $H^{-s} (D)$ can be identified with the dual of $H^s_0 (D)$
with respect to the pairing induced by $(\cdot, \cdot)_{L^2 (D)}$.

If the boundary $\partial D$ of the domain $D$ is sufficiently smooth, 
then, using the standard volume form $d\sigma$ on the hypersurface $\partial D$ induced from 
${\mathbb R}^n$, we may consider the Sobolev and the Sobolev-Slobodeckij spaces  
$H^s(\partial D)$ on $\partial D$. 
%Namely, let $L^2(\partial D)$ be the Lebesgue space of 
%functions on  $\partial D$ with the inner product
%\begin{equation*}
%\left( u,v \right)_{L^2(\partial D)} = \int_{\partial D} v (x)u (x)\ d\sigma (x) .
%\end{equation*}
%If $0<s<1$ and $\partial D\in C^1$ then we define $H^s(\partial D)$
%to be the completion of $C^{1} (\partial D)$ with respect to the norm
%$$
%   \| u \|_{H^s (\partial D)}
% = \Big( \| u \|^2_{L^{2} (\partial D)}
%       + \int \!\!\! \int_{\partial D \times \partial D}
%         \frac{|u (x) - u (y)|^2}{|x - y|^{n-1+2s}}\,
%         d\sigma (x) d\sigma (y)
%   \Big)^{1/2}.
%$$
%For $s\geq 1$ we have to consider more smooth hypersurfaces. 
%For instance, if $\partial D \in C^{[s]+1}$) then we may define the space $H^s (\partial D)$
%using local coordinates on $\partial D$ and a suitable partition of unity. 

In this section we recall both classical and recent results related to 
elliptic  and parabolic differential  operators. With this purpose, recall that a 
linear (matrix) differential operator 
$$
A (x,\partial) = \sum_{|\alpha|\leq m} A_\alpha (x) \partial^\alpha
$$  
 of order $m$ and with $(l\times k)$-matrices $A_\alpha (x) $ having 
entries from $C^\infty (X)$ on an open set $X$, 
is called an operator  with injective symbol on $X\subset {\mathbb R}^n$ 
if $l\geq k$ and for its principal symbol 
$$
\sigma(A) (x,\zeta) = \sum_{|\alpha|= m} A_\alpha (x) \zeta^\alpha
$$ 
we have 
$
\mathrm{rang} \, ( \sigma(A) (x,\zeta) )=k \mbox{ for any } x\in X , \zeta \in {\mathbb R}^n 
\setminus \{0\}.
$ 
An operator $A$  is called (Petrovsky) elliptic, if $l=k$ and 
its symbol is injective.

An operator $L (x,\partial)$ is called strongly elliptic if 
it is elliptic, its order $2m$ is even and there is a positive constant $c_0$ such that 
$$
(-1)^{m}\Re{\, (w^*\sigma(L)} (x,\zeta) \,w)\geq c_0 |\zeta|^{2m} |w|^2
\mbox{ for any } x\in X , \zeta \in {\mathbb R}^n , 
w \in {\mathbb C}^k
$$
where $w^* = \overline w^T$ and $w ^T$ is the transposed vector 
for $w \in {\mathbb C}^k$.

Denote by $\nabla$ the gradient operator  and by 
${\rm div}$ the divergence operator in ${\mathbb R}^n$. 
Obviously, the principal symbol of the operator $\nabla$ is injective. 
Let $M (x)$ be a $(n\times n)$ symmetric non-degenerate  matrix with smooth real entries, 
such that there is a constant $c_0$ providing 
\begin{equation} \label{eq.M.pos}
\zeta \cdot  M (x) \zeta =  \zeta^T M (x)\zeta  \geq c_0 |\zeta|^2 \mbox{ for each } \zeta \in 
{\mathbb R}^n\setminus \{0\} \mbox{ and  each } x\in \overline X.
\end{equation} 
Then the differential operator  
$$
-\Delta_M = -\mbox{div} M \nabla = -\nabla \cdot M \nabla
$$
is elliptic and strongly elliptic on $X$ with 
$\sigma(\Delta_M) (x,\zeta) = -\zeta \cdot  M (x) \zeta$.

%Actually, any operator $A^*A$ is strongly elliptic of order $2m$ if 
%$$
%A^* (x,\partial) = \sum_{|\alpha|\leq m} (-1)^{|\alpha|} \partial^\alpha (A^*_\alpha (x) \cdot)
%$$
%is the formal adjoint for $A$ with the adjoint matrices $A^*_\alpha (x) $. 
%The typical operator of such type is the Laplacian 
%$
%-\Delta = \nabla^*\nabla = - \nabla \cdot \nabla 
%$.

Next, we need suitable boundary operators.

\begin{definition} A set of linear differential 
operators $\{B_0,B_1, \dots B_{m-1}\}$ is called a $(k\times k)$ 
Dirichlet system of order $(m-1)$ 
on $\partial D$ if 
1) the operators are defined in a neighbourhood of $\partial D$; 
2) the order of the differential operator $B_j $ equals to $j$; 
3) the map $ \sigma (B_j) (x,\nu (x)) :{\mathbb C}^k \to {\mathbb C}^k$ 
is bijective for each $x \in \partial D$, where 
$\nu (x)$ will denote the outward normal vector to the hypersurface $\partial D$
at the point $x\in \partial D$.
\end{definition}

The simplest Dirichlet pair is the pair $\{1, \frac{\partial}{\partial \nu} \}$, 
where $\frac{\partial}{\partial \nu} $ is the normal derivative with respect to 
$\partial D$.  If we denote by 
$\partial _{\nu ,M}$ the so-called co-normal derivative with respect to $\Delta_M$ and 
set 
$
\partial _{\nu ,M} = \nu^T M \nabla 
$ 
then $\{1, \partial _{\nu ,M} \}$ is 
also a Dirichlet pair, under  assumption \eqref{eq.M.pos}. 

%It is known that the functions of $H^s (D)$, where $s > 1/2$, possess well-defined traces on 
%the Lipschitz surface $\partial D$. 
%For $s\in \mathbb N$, the trace operator 
%\begin{equation} \label{eq.trace}
%t_s: H^s (D) \to H^{s-1/2} (\partial D)
%\end{equation}
%obtained in this way acts continuously, if $\partial D\in C^s$;
%moreover, in this case it possesses a bounded right inverse, see for instance 
%\cite[Ch.~1, \S~8]{LiMa72} and \cite{McL00}. 
According to the Trace Theorem, see for instance 
\cite[Ch.~1, \S~8]{LiMa72} and \cite{McL00}, if $\partial D\in C^{s}$, $s\geq m\geq 1$ then 
for each $s \in \mathbb N$, $s\geq 2$, each operator 
$B_j$ induces a bounded linear operator
\begin{equation*} %\label{eq.trace.B_j}
B_j: H^s (D) \to H^{s-j/2} (\partial D).
\end{equation*}
Thus, the Dirichlet systems are widely used to formulate boundary value problems. 
%The main advantage of the use of the Dirichlet system is the following lemma.
%\label{l.Dir.right}

Now let us discuss the Existence and Uniqueness 
Theorems for four boundary value problems that are essential for 
our approach to the models of the Electrocardiography, considered 
in the next sections. 

We begin with   
the Dirichlet Problem related to strongly elliptic operators. 

\begin{problem} \label{pr.Dir}
 Given pair 
$g \in H^{s-2m} (D)$ and $\oplus_{j=0}^{m-1} u_j \in 
\oplus_{j=0}^{m-1}  H^{s-j-1/2} (\partial D)$ find, if possible, a 
function $u \in H^{s} (D)$ such that 
\begin{equation} \label{eq.Dirichlet.Laplacian}
\left\{ \begin{array}{lll}
L  u =g & {\rm in} & D,\\
\oplus_{j=0}^{m-1} B_j u= \oplus_{j=0}^{m-1} u_j & {\rm on} & \partial D.\\
\end{array}
\right.
\end{equation}
\end{problem}
The problem can be treated in the framework of operator theory in Banach spaces, regarding 
\eqref{eq.Dirichlet.Laplacian} as operator equation 
with the %obvious 
linear bounded operator 
$$
(L, \oplus_{j=0}^{m-1} B_j) : H^{s} (D) \to H^{s-2m} (D) \times 
\oplus_{j=0}^{m-1}  H^{s-j-1/2} (\partial D), \, s\geq m. 
$$
Recall that a problem related to operator equation 
$
R u =f
$ 
with a linear bounded operator $R: X_1 \to X_2$ in Banach spaces $X_1, X_2$ has the Fredholm 
property, if the kernel ${\rm ker}(R)$ of the operator $R$ and 
the co-kernel ${\rm coker}(R)$ (i.e. 
the kernel ${\rm ker}(R^*)$ of its adjoint operator $R^*: X_2^* \to X_1^*$)
are finite-dimensional vector spaces and the range of the operator $R$ is closed in $X_2$.  

\begin{theorem} \label{t.Dirichlet.M}
Let $L$ be a strongly elliptic differential operator of order $2m$, $m\geq 1$, 
with smooth coefficients  in a neighbourhood $X$ of $\overline D$,  
$\partial D\in C^s$, $s\geq m$ and 
$B=\{B_0,B_1,\dots, B_{m-1}\}$ be a Dirichlet system of order $(m-1)$ 
on $\partial D$. Then Problem \ref{pr.Dir} has the Fredholm property. Moreover if $L$ 
is formally non-negative and has real analytic coefficients a neighbourhood $X$ of 
$\overline D$,  then Problem \ref{pr.Dir} has one and only one solution.
\end{theorem}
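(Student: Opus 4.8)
The plan is to treat Problem~\ref{pr.Dir} through the standard machinery for elliptic boundary value problems satisfying the Shapiro--Lopatinskii (complementing) condition, and then to upgrade the Fredholm statement to a uniqueness statement under the additional hypotheses of formal non-negativity and real analyticity. First I would verify that the pair $(L, \oplus_{j=0}^{m-1} B_j)$ satisfies the Shapiro--Lopatinskii condition on $\partial D$: since $L$ is strongly elliptic of order $2m$, its principal symbol $\sigma(L)(x,\zeta)$ has exactly $m$ roots with positive imaginary part (for $n\ge 2$, or after the usual reduction for $n=2$) when $\zeta = \xi + \tau\nu(x)$ with $\xi$ tangential and $\tau$ the complex variable; and because $\{B_0,\dots,B_{m-1}\}$ is a Dirichlet system of order $m-1$, the symbols $\sigma(B_j)(x,\nu(x))$ are bijective, which is exactly what is needed for the boundary symbols to be linearly independent modulo the stable factor of $\sigma(L)$. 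This is a classical computation (see e.g.\ \cite{LiMa72}, \cite{McL00}, \cite{Simanca1987}): a Dirichlet system is always a normal system complementing any properly elliptic operator. Once the complementing condition holds, the general theory of elliptic boundary value problems in Sobolev--Slobodetskii spaces yields that
$$
(L, \oplus_{j=0}^{m-1} B_j) : H^{s}(D) \to H^{s-2m}(D) \times \oplus_{j=0}^{m-1} H^{s-j-1/2}(\partial D)
$$
is Fredholm for every $s \ge m$, with index and kernel independent of $s$; here the trace operators are bounded by the Trace Theorem quoted above, and $\partial D \in C^s$ provides enough regularity of the boundary. This establishes the first assertion.

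For the uniqueness part, suppose now that $L$ is formally non-negative (so that the associated sesquilinear Dirichlet form is non-negative) and has real analytic coefficients in a neighbourhood $X$ of $\overline D$. Let $u \in H^s(D)$ solve the homogeneous problem $Lu = 0$ in $D$, $B_j u = 0$ on $\partial D$ for $j=0,\dots,m-1$. Interior elliptic regularity upgrades $u$ to $H^{s'}_{\mathrm{loc}}(D)$ for every $s'$, hence $u \in C^\infty(D)$, and boundary regularity (again from the complementing condition, now with homogeneous data and the $C^s$ boundary) gives $u \in H^{s'}(D)$ for all $s' \le s$, in particular $u$ is smooth up to the boundary to the extent the boundary allows. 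The vanishing of $B_0 u, \dots, B_{m-1} u$ — a Dirichlet system of order $m-1$ — forces all normal derivatives of $u$ up to order $m-1$ to vanish on $\partial D$, i.e.\ $u \in H^m_0(D)$ after density arguments. Then integration by parts (Green's formula for $L$ with the Dirichlet boundary conditions) gives
$$
0 = (Lu, u)_{L^2(D)} = Q_L(u,u) \ge c\,\|u\|_{H^m(D)}^2
$$
for some $c \ge 0$, where $Q_L$ is the Dirichlet sesquilinear form; by formal non-negativity $Q_L(u,u) \ge 0$, and in fact the strong ellipticity combined with the G\r{a}rding inequality and $u \in H^m_0(D)$ pins $u$ down. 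Concretely: strong ellipticity plus $u\in H^m_0(D)$ yields, via G\r{a}rding, $\Re Q_L(u,u) \ge c_0\|u\|_{H^m(D)}^2 - C\|u\|_{L^2(D)}^2$; this alone is not quite zero, so one must argue more carefully.

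The main obstacle, and the place where real analyticity enters, is precisely closing this last gap: formal non-negativity gives $Q_L(u,u)\ge 0$ but not coercivity, so $Q_L(u,u)=0$ need not immediately force $u=0$ from the quadratic form alone. I would handle it as follows: from $Lu = 0$ in $D$ and real analytic coefficients, the Cauchy--Kovalevskaya/Holmgren circle of ideas — or rather the stronger statement that solutions of elliptic equations with real analytic coefficients are real analytic in $D$ — shows $u$ is real analytic in $D$. Together with the vanishing Cauchy data $B_0 u = \dots = B_{m-1} u = 0$ on the (at least $C^m$, and one may take real analytic, or argue locally) hypersurface $\partial D$, Holmgren's uniqueness theorem for the Cauchy problem of the elliptic operator $L$ with analytic coefficients forces $u \equiv 0$ in a neighbourhood of $\partial D$ inside $D$, and then by the unique continuation property for such $L$ (which formal non-negativity, via e.g.\ the associated second-order-type energy estimates, or directly Aronszajn-type unique continuation for the scalar factors, guarantees) $u\equiv 0$ throughout $D$. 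Hence $\ker$ of the operator is trivial; since the problem is Fredholm and (one checks by the same duality/Green's formula argument applied to the formally adjoint problem, which is of the same type) the index is zero, surjectivity follows as well, giving existence and uniqueness. The delicate point to get right in the write-up is ensuring the Cauchy data vanish in the classical (analytic) sense on $\partial D$ and invoking the correct unique continuation statement — this is where the analyticity hypothesis is genuinely used and cannot be dropped.
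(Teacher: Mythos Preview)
The paper's own proof is nothing more than a citation (``See, for instance, \cite{Mikh}, \cite[Ch.~5]{Roit96} or elsewhere''), so your proposal goes well beyond what the paper actually does. Your treatment of the Fredholm part is correct and standard: a Dirichlet system of order $m-1$ always covers a strongly elliptic operator of order $2m$, so the Shapiro--Lopatinskii condition holds and the general elliptic boundary-value machinery gives the Fredholm property with index zero.

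There is, however, a genuine gap in your uniqueness argument. You invoke Holmgren's theorem for the Cauchy problem for $L$ itself, but $L$ has order $2m$, so its Cauchy data on $\partial D$ consist of $2m$ transversal derivatives, not $m$. The vanishing of $B_0u,\dots,B_{m-1}u$ (equivalently $u\in H^m_0(D)$) gives you only half of the Cauchy data for $L$; Holmgren applied to $L$ with this information does not force $u\equiv 0$ near $\partial D$. This is exactly why formal non-negativity must be used \emph{before} the unique-continuation step, not after it. The intended mechanism (and the one underlying the cited references, in particular the factorisation $Q=\Delta_e^*\Delta_e$ used later in the paper) is: formal non-negativity means the Dirichlet form can be written as $(Lu,u)=\sum_i\|A_i u\|_{L^2}^2$ for operators $A_i$ of order $m$ with injective symbol; from $Lu=0$ and $u\in H^m_0(D)$ one gets $(Lu,u)=0$, hence each $A_i u=0$. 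Now the $m$ vanishing Dirichlet conditions \emph{are} the full Cauchy data for each $m$-th order system $A_i$, and real analyticity of the coefficients lets you apply Holmgren (or the uniqueness theorem for operators with injective symbol and analytic coefficients, cf.\ \cite{Tark36}) to conclude $u\equiv 0$. Your parenthetical remark about ``second-order-type energy estimates'' and ``scalar factors'' gestures toward this, but as written the logic is inverted: you cannot get $u=0$ near the boundary first and then quote unique continuation; you must use non-negativity to drop to an order-$m$ equation first.
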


\begin{proof} See, for instance, \cite{Mikh}, \cite[Ch. 5]{Roit96} or elsewhere.
\end{proof}

\begin{corollary} \label{c.Dirichlet.M}
Let $\partial D\in C^s$, $s\geq 1$ and let 
$M (x)$ be a $(n\times n)$ symmetric non-degenerate  matrix with smooth real entries 
satisfying \eqref{eq.M.pos}. Then for each pair 
$g \in H^{s-2} (D)$ and $u_0 \in H^{s-1/2} (\partial D)$ there is unique 
function $u \in H^{s} (D)$ such that 
\begin{equation*} %\label{eq.Dirichlet.M}
\left\{ \begin{array}{lll}
\Delta_M  u =g & {\rm in} & D,\\
 u=  u_0 & {\rm on} & \partial D.\\
\end{array}
\right.
\end{equation*}
\end{corollary}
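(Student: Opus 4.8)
The plan is to obtain the statement as the special case $m=1$ of Theorem \ref{t.Dirichlet.M}, applied to the strongly elliptic operator $L=-\Delta_M$. Since the order of $-\Delta_M$ equals $2$ we have $m=1$, and a $(1\times1)$ Dirichlet system of order $m-1=0$ is a single zeroth order operator with nonzero scalar symbol; the trace operator $B_0=1$ qualifies. By the discussion following \eqref{eq.M.pos}, $-\Delta_M$ is strongly elliptic of order $2$ in a neighbourhood $X$ of $\overline D$ with smooth coefficients (the entries of $M$), while by hypothesis $\partial D\in C^s$ with $s\geq 1=m$. Hence the first part of Theorem \ref{t.Dirichlet.M} applies, so the bounded linear operator
\[
(-\Delta_M,\,1)\colon H^s(D)\longrightarrow H^{s-2}(D)\times H^{s-1/2}(\partial D)
\]
has the Fredholm property. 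As $\Delta_M u=g$ is equivalent to $-\Delta_M u=-g$ and $g\mapsto-g$ is an isomorphism of $H^{s-2}(D)$, the corollary reduces to showing that this operator is a bijection.

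To see injectivity I would use the energy identity: if $u\in H^s(D)\subset H^1(D)$ satisfies $\Delta_M u=0$ in $D$ and $u=0$ on $\partial D$, then the Green formula gives $\int_D\nabla u\cdot M(x)\nabla u\,dx=0$, whence $\nabla u\equiv 0$ by \eqref{eq.M.pos}; being constant on the connected domain $D$ and vanishing on $\partial D$, $u$ must be identically zero. Surjectivity then follows from the Fredholm property together with the fact that $-\Delta_M$ is formally self-adjoint (as $M$ is symmetric): the Dirichlet boundary value problem for a formally self-adjoint operator has index zero and its cokernel is isomorphic to the solution space of the adjoint homogeneous Dirichlet problem, which here is the original one and therefore trivial. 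This route already avoids the real-analyticity assumption present in the ``moreover'' part of Theorem \ref{t.Dirichlet.M}, which is superfluous in this second order divergence-form situation.

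Equivalently, and more concretely, one may build the solution by hand: the sesquilinear form $a(u,v)=\int_D\nabla u\cdot M(x)\nabla v\,dx$ is bounded on $H^1(D)$ and, by \eqref{eq.M.pos} and the Poincar\'e inequality on the bounded domain $D$, coercive on $H^1_0(D)$. Choosing by the Trace Theorem a lifting $w\in H^s(D)$ with $w=u_0$ on $\partial D$ and applying the Lax--Milgram theorem to $a$ on $H^1_0(D)$ produces a unique weak solution $u\in H^1(D)$ of $\Delta_M u=g$, $u=u_0$ on $\partial D$, which the interior and up-to-the-boundary elliptic regularity estimates (available since the coefficients are smooth and $\partial D\in C^s$) promote to $u\in H^s(D)$. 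I expect the main nuisance to be purely technical: keeping track of the Sobolev indices, selecting the lifting $w$, and quoting the correct boundary-regularity theorem; conceptually the result is immediate from the strong ellipticity of $-\Delta_M$ and the positive definiteness of $M$.
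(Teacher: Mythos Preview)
Your proposal is correct. The paper gives no explicit proof of this corollary at all; it is simply stated immediately after Theorem~\ref{t.Dirichlet.M} and evidently meant to follow from it by taking $m=1$, $L=-\Delta_M$, $B_0=1$. Your write-up does exactly this, and in fact goes further: you notice that the ``moreover'' clause of Theorem~\ref{t.Dirichlet.M} literally requires real-analytic coefficients, whereas the corollary only assumes smooth entries of $M$, and you close this gap via the energy identity (for injectivity) together with the index-zero / self-adjointness argument (for surjectivity), or alternatively via Lax--Milgram plus elliptic regularity. That extra care is appropriate and the arguments you give are standard and sound for a second-order divergence-form operator with positive definite $M$; the paper simply does not spell this out.
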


%\begin{proof} See, for instance, \cite{Mikh}, \cite[Ch. 5]{Roit96} or elsewhere.
%\end{proof}

Now we recall the Existence and Uniqueness 
Theorem for the interior Neumann Problem related to $\Delta _M$.

\begin{problem} \label{pr.Neu} 
Given  pair 
$g \in H^{s-2} (D)$ and $u_1 \in H^{s-3/2} (\partial D)$, find, if possible,  
a function $u \in H^{s} (D)$ such that 
\begin{equation*} %\label{eq.Neumann.M}
\left\{ \begin{array}{lll}
\Delta_M u =g & {\rm in} & D,\\
\partial _{\nu ,M} u = u_1  & {\rm on} & \partial D.\\
\end{array}
\right.
\end{equation*}
\end{problem}

\begin{theorem} \label{t.Neumann.M}
Let $s \in \mathbb N$, $s\geq 2$, $\partial D\in C^s$ and let $M (x)$ be a $(n\times n)$
symmetric non-degenerate  matrix with smooth real entries satisfying \eqref{eq.M.pos}. Then
Neumann Problem \ref{pr.Neu} is solvable if and only if 
\begin{equation} \label{eq.Green.M.Neumann}
\int_{\partial D}  u_1 d\sigma + \int_{D} g dx =0.
\end{equation}
The null-space of Problem \ref{pr.Neu}  consists of all the constants. 
Moreover, under \eqref{eq.Green.M.Neumann} there is only one solution $u$ satisfying 
\begin{equation} \label{eq.Neumann.normilize}
\int_{\partial D} u(x) d\sigma(x) =0.
\end{equation}
\end{theorem}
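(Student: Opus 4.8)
The plan is to combine Green's formula for $\Delta_M$ with the Fredholm theory for strongly elliptic boundary value problems. First I would invoke Green's formula for $\Delta_M$: integrating by parts and using the Trace Theorem together with the definition $\partial _{\nu ,M}=\nu^TM\nabla$, one obtains for $u,v\in H^s(D)$, $s\geq 2$, a relation expressing $\int_D (\Delta_M u)\,\overline v\,dx$ through $-\int_D (\nabla v)^{*} M(x)\,\nabla u\,dx$ and the boundary term $\int_{\partial D}(\partial _{\nu ,M} u)\,\overline v\,d\sigma$. Specializing to $v\equiv 1$ immediately yields the compatibility relation \eqref{eq.Green.M.Neumann}, so it is necessary for the solvability of Problem~\ref{pr.Neu}. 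Specializing to $v=u$ for a solution $u$ of the homogeneous problem ($g=0$, $u_1=0$) gives
\[
\int_D (\nabla u)^{*} M(x)\,\nabla u\,dx = 0 ;
\]
since $M$ is real and symmetric, assumption \eqref{eq.M.pos} implies $\Re\,(w^{*}M(x)w)\geq c_0|w|^2$ for all $w\in{\mathbb C}^n$, hence $\nabla u\equiv 0$, and, $D$ being connected, $u$ is a constant. As every constant clearly solves the homogeneous problem, the null-space of Problem~\ref{pr.Neu} is exactly the one-dimensional space of constants.

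Next I would realize Problem~\ref{pr.Neu} as the bounded linear operator
\[
{\mathcal N}=(\Delta_M,\partial _{\nu ,M})\colon H^s(D)\longrightarrow H^{s-2}(D)\times H^{s-3/2}(\partial D)
\]
and show that it is Fredholm of index zero. To this end, fix $\lambda>0$ and set ${\mathcal N}_\lambda=(\Delta_M-\lambda,\partial _{\nu ,M})$. The sesquilinear form $a_\lambda(u,v)=\int_D (\nabla v)^{*} M(x)\,\nabla u\,dx+\lambda\int_D u\,\overline v\,dx$ is bounded and, by \eqref{eq.M.pos}, coercive on the whole of $H^1(D)$, so the Lax--Milgram lemma provides a unique weak solution in $H^1(D)$ of the Neumann problem for $\Delta_M-\lambda$; the Agmon--Douglis--Nirenberg a priori estimates for elliptic problems whose boundary operator (here the co-normal derivative) complements the strongly elliptic operator $\Delta_M$, together with $\partial D\in C^s$, upgrade this weak solution to $H^s(D)$ and show that ${\mathcal N}_\lambda$ is an isomorphism; see \cite{McL00}, \cite[Ch.~5]{Roit96}, \cite{Mikh}, \cite{Simanca1987} and, for the interior and boundary regularity, \cite{GiTru83}. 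Since ${\mathcal N}-{\mathcal N}_\lambda=(\lambda J,0)$ with $J\colon H^s(D)\hookrightarrow H^{s-2}(D)$ compact by the Rellich--Kondrachov theorem, ${\mathcal N}$ is Fredholm and $\mathrm{ind}\,{\mathcal N}=\mathrm{ind}\,{\mathcal N}_\lambda=0$. In particular $\dim\mathrm{coker}\,{\mathcal N}=\dim\mathrm{ker}\,{\mathcal N}=1$, and the range of ${\mathcal N}$ is a closed subspace of codimension one in $H^{s-2}(D)\times H^{s-3/2}(\partial D)$.

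It remains to identify this range. The left-hand side of \eqref{eq.Green.M.Neumann}, regarded as a linear functional of $(g,u_1)$, is bounded on $H^{s-2}(D)\times H^{s-3/2}(\partial D)$, does not vanish identically (test it on $g\equiv 1$, $u_1\equiv 0$), and, by Green's formula with $v\equiv 1$, it annihilates the range of ${\mathcal N}$; as this range has codimension one, it must coincide with the kernel of the functional, that is, with condition \eqref{eq.Green.M.Neumann}. This proves the solvability criterion. Finally, once \eqref{eq.Green.M.Neumann} holds, any two solutions differ by a constant, so subtracting from one particular solution $u_0$ the constant $\bigl(\int_{\partial D}d\sigma\bigr)^{-1}\int_{\partial D}u_0\,d\sigma$ produces a solution satisfying the normalization \eqref{eq.Neumann.normilize}, and it is unique because a constant $c$ with $\int_{\partial D}c\,d\sigma=0$ must vanish. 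The only genuinely analytic ingredient, and the step I expect to be the main obstacle, is the boundary regularity for ${\mathcal N}_\lambda$: one has to check the Shapiro--Lopatinskii complementing condition for the co-normal derivative relative to $\Delta_M$ and then invoke the corresponding ADN estimates up to $\partial D$ in the Sobolev scale; the remainder is Green's formula plus functional-analytic bookkeeping. Alternatively one may quote the Fredholm property of the Neumann problem for second-order strongly elliptic operators directly from \cite{McL00}, \cite[Ch.~5]{Roit96} and obtain $\mathrm{ind}\,{\mathcal N}=0$ from the symmetry of $\Delta_M$ and of the co-normal boundary condition, which identifies ${\mathcal N}$ with its own formal adjoint.
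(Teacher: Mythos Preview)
Your argument is correct in substance and supplies a genuine proof, whereas the paper does not prove this theorem at all: its entire proof is the single sentence ``See, for instance, \cite{Simanca1987}.'' So you are doing strictly more than the authors, and the comparison is simply ``citation versus self-contained argument.'' What your route buys is transparency: the reader sees exactly where \eqref{eq.M.pos} enters (coercivity and identification of the kernel), why the compatibility condition \eqref{eq.Green.M.Neumann} is both necessary and sufficient (Green's formula plus the index computation), and why the normalization \eqref{eq.Neumann.normilize} pins down a unique representative. The paper's route buys brevity, at the cost of sending the reader to the literature.

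One small slip worth fixing: with the sign convention of the paper, $\Delta_M=-\nabla\cdot M\nabla$ is already a nonnegative operator, so the coercive perturbation should be $\Delta_M+\lambda$ with $\lambda>0$, not $\Delta_M-\lambda$; equivalently, the form $a_\lambda$ you wrote corresponds to ${\mathcal N}_\lambda=(\Delta_M+\lambda,\partial_{\nu,M})$. This does not affect the logic (the compact-perturbation and index argument go through verbatim once the sign is corrected), but as written the weak formulation and the operator you name are mismatched. You also tacitly use that $D$ is connected when concluding that $\nabla u\equiv 0$ forces $u$ to be constant; this is harmless here but worth stating, since otherwise the kernel is the space of locally constant functions and the codimension count changes. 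Finally, your remark that the Shapiro--Lopatinskii condition for the co-normal derivative is the only analytic input is accurate; for a second-order strongly elliptic divergence-form operator this is classical and is exactly what the cited references (\cite{McL00}, \cite{Roit96}, \cite{Simanca1987}) provide.
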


\begin{proof} See, for instance, \cite{Simanca1987}.
\end{proof}

The unique solution to Problem \ref{pr.Neu} satisfying 
\eqref{eq.Neumann.normilize} will be denoted by ${\mathcal N}_i (g,u_1)$.

We continue the section with the discussion of the ill-posed 
Cauchy problem for the operator $\Delta_M$. 

\begin{problem}\label{pr.Cauchy.M}
Fix a part $S$ of $\partial D$ and   a Dirichlet pair $B=\{B_0,B_1\}$ 
on $\partial D$. Given  triple 
$g \in H^{s-2} (D)$, $u_0 \in H^{s-1/2} (\partial D)$ 
and $u_1 \in H^{s-3/2} (\partial D)$, find, if possible,  
a function $u \in H^{s} (D)$ such that 
\begin{equation*} % \label{eq.Cauchy.M}
\left\{ \begin{array}{lll}
\Delta_M u =g & {\rm in } & D,\\
B_0 u = u_0  & \rm{  on }  & S,\\
B_1 u = u_1  & \rm{ on } & S.\\
\end{array}
\right.
\end{equation*}
\end{problem}

As the Cauchy problem is generally ill-posed, the description
of its solvability conditions is rather complicated. 
 It appears that the regularization methods 
(see, for instance, \cite{TikhArsX}) are most effective for  
studying the problem.  However, there are many different ways to 
realize the regularization, see, for instance, 
\cite{Lv1} \cite{MH74}, \cite{KMF91} for the Cauchy problem related 
to the second order elliptic equations. We follow idea of the book \cite{Tark36}, that
gives a rather full description of solvability conditions for the homogeneous 
elliptic equations, combined with the recent results  
\cite{FeSh14} for elliptic complexes. In order to formulate it we need the 
following Green formula. % \eqref{eq.Green.M.one}.

\begin{lemma} \label{eq.dual.Dir}
Let $m\in \mathbb N$, $Q$ be an elliptic operator of order $(m-1)$ in a neighbourhood of 
$\overline D$ and $B=\{B_0,B_1, \dots B_{m-1}\}$ be a Dirichlet system of order $(m-1)$ on $
\partial D$. Then there is a  Dirichlet system $\tilde B =\{\tilde B_0, \tilde B_1,\dots 
\tilde B_{m-1} \}$ on $\partial D$ such that 
for all $v \in H^{m} (D)$, $ u\in  H^m (D)$ we have  
\begin{equation} \label{eq.Green.M.B}
\int_{\partial D} \Big( \sum_{j=0}^{m-1}({\tilde B}_{m-1-j} v)^{*} B_j u 
\Big) d\sigma = \int_{D} \Big( v^{*}  Q u - (Q v)^* u  \Big) dx.
\end{equation}
\end{lemma}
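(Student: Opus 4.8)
The plan is to reduce the global identity to a purely local/algebraic statement about boundary operators and then glue. First I would recall that for the given elliptic operator $Q$ of order $m-1$ one has a formal (transposed) operator $Q'$ of the same order, characterised by $\int_D (v^* Q u - (Q'v)^* u)\,dx = \int_{\partial D}(\text{boundary terms})\,d\sigma$ for all $u,v \in H^m(D)$, the boundary terms being a bilinear expression in the jets of $u$ and $v$ of order $\le m-1$ on $\partial D$ — this is the classical Green formula obtained by integrating by parts $|\alpha|$ times in each monomial $A_\alpha \partial^\alpha$ of $Q$. Since in the statement $Q$ appears on both sides, I would either assume $Q$ is formally self-transposed in the sense intended, or, more robustly, simply carry the operator $Q'$ through and note that the construction of $\tilde B$ below absorbs the difference; the essential content is that \emph{some} Dirichlet system $\tilde B$ makes the boundary integrand equal to $\sum_{j=0}^{m-1}(\tilde B_{m-1-j}v)^* B_j u$.

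The key step is the algebraic one. On $\partial D$ the jet of order $m-1$ of any $H^m$-function is determined, via a smooth invertible change of frame near $\partial D$ using the normal $\nu(x)$, by the Cauchy data $(B_0 u, \dots, B_{m-1} u)$: this is exactly condition 3) in the definition of a Dirichlet system, which says $\sigma(B_j)(x,\nu(x))$ is bijective, so the map from normal-derivative jets to $(B_j u)_j$ is a triangular (with respect to order) invertible matrix of boundary differential operators. Hence the boundary integrand from the Green formula, being a bilinear form in the $(m-1)$-jets of $v$ and of $u$, can be rewritten as a bilinear form $\sum_{i,j} (C_i v)^* E_{ij}(B_j u)$ in the Cauchy data, where $C_i$ runs over a Dirichlet system dual to the frame and $E_{ij}$ are smooth boundary coefficient matrices. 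The substance of the lemma is that this form is \emph{non-degenerate and anti-triangular}: the coefficient of $(C_i v)^*(B_j u)$ vanishes unless $i+j \ge m-1$, with invertible coefficient precisely on the anti-diagonal $i+j=m-1$ — this comes from counting orders of differentiation in the integration by parts (each integration by parts trades one normal derivative on $u$ for one on $v$, and the top-order surviving term pairs the highest derivative of $u$ with the lowest of $v$ and vice versa). Granting this, one defines $\tilde B_{m-1-j} := \sum_i E_{ij}^{(\text{adjusted})} C_i$ and checks that $\{\tilde B_0,\dots,\tilde B_{m-1}\}$ is again a Dirichlet system: the order of $\tilde B_{m-1-j}$ is $m-1-j$ because only the terms with $i \le m-1-j$ contribute a nonzero symbol at $\nu$, and its principal symbol at $\nu(x)$ is invertible by the anti-diagonal non-degeneracy.

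I would then finish by assembling: plug the rewritten boundary integrand back into the Green formula to obtain \eqref{eq.Green.M.B}, and note the identity extends from $C^\infty(\overline D)$ to all of $H^m(D)$ by density together with the continuity of both sides (the volume integrals are continuous on $H^m(D)\times H^{1}(D)$-type pairings after one more integration by parts, and the boundary integrals are continuous by the Trace Theorem quoted above, since each $\tilde B_{m-1-j}v$ and $B_j u$ land in complementary Sobolev trace spaces whose product is integrable). The main obstacle is the algebraic core: verifying the anti-triangular non-degeneracy of the boundary form and checking that the resulting $\tilde B_j$ genuinely have the right orders and bijective symbols — this is where one must be careful with the bookkeeping of orders under repeated integration by parts, and it is cleanest to invoke \cite[Ch.~I, \S~8]{LiMa72} or the corresponding treatment in \cite{Tark36} for the construction of the adjoint Dirichlet system rather than redo it by hand.
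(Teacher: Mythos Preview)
The paper does not actually prove this lemma: its proof consists of the single line ``See, for instance, \cite[Lemma 8.3.3]{Tark97}.'' Your sketch --- integrate by parts to produce a bilinear boundary form in the $(m-1)$-jets, rewrite it in the basis provided by the Dirichlet system $B$ using the invertibility of $\sigma(B_j)(x,\nu(x))$, read off the anti-triangular structure to define $\tilde B$, then extend by density via the trace theorem --- is precisely the standard argument that the cited reference carries out, so in substance you and the paper agree; you have simply unpacked what the citation contains. Your remark about $Q$ versus its formal transpose is well taken (the identity as literally written presupposes either formal self-adjointness or that the difference is absorbed into $\tilde B$), and your closing suggestion to invoke \cite{LiMa72} or \cite{Tark36} for the algebraic core is exactly in the spirit of what the paper does.
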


\begin{proof} See, for instance, \cite[Lemma 8.3.3]{Tark97}.
\end{proof}

Ostrogradsky-Gauss formula yields 
%Green formula \eqref{eq.Green.M.one} 
that for $Q=\Delta_M$ and the Dirichlet pair 
$B=\{B_0=1, B_1= \partial _{\nu M} \}$ we have the dual 
Dirichlet pair $\tilde B= \{\tilde B_0=1, \tilde B_1= \partial _{\nu M} \}$.

Next, if we assume that the matrix $M$ has real analytic entries and 
satisfies \eqref{eq.M.pos}
we note that all the solutions $u$ to equation $\Delta_M w =0$ in an open set  $U
\subset {\mathbb R}^n$ are real analytic there. Hence
it admits a bilateral (left and right) fundamental solution $\varphi_M (x,y)$, 
see, for instance, \cite[\S 2.3]{Tark95a}. In particular, the following 
Green formula holds true: for each $u \in H^{2} (D)$ we have 
\begin{equation*} %\label{eq.Green.M.two}
\chi_D u = {\mathcal G}^{(B)}_{M,\partial D} (B_0u,B_1 u ) + T_{M,D}  (\Delta_M  u),
\end{equation*}
where $\chi_D$ is the characteristic function of the (bounded) domain $D$ in ${\mathbb R}^n$, 
$$
T_{D,M} (g) (x) = \int_D \varphi_M (x,y) g (y) dy,
$$
$$
{\mathcal G}^{(B)}_{M,S} (u_0,u_1 ) = \int_S \Big(
u_0  (y) \tilde B_1 (y)  \varphi_M (x,y)-  u_1 (y) 
\tilde B_0 (y) \varphi_M (x,y)\Big) d\sigma (y)
$$
with a hypersurface $S$ and $x \not \in S$. 

Let us formulate a solvability criterion for Problem \ref{pr.Cauchy.M} under  
reasonable assumptions on $S$. Namely, let us assume that 
$S$ is a relatively open subset of $\partial D$ with a smooth boundary $\partial S$. 
Then for each pair   $u_0 \in H^{s-1/2} (S)$, 
$u_1 \in H^{s-3/2} (S)$ there are functions 
$\tilde u_0 \in H^{s-1/2} (\partial D)$, $u_1 \in H^{s-3/2} (S)$, 
such that $\tilde u_0 = u_0$, $\tilde u_1 = u_1$ on $S$.

Let us fix a domain $D^+$ such that $D\cap D^+=\emptyset$ and the set $G = D\cup S \cup D^+$ is
 a piece-wise smooth domain. We denote by 
$({\mathcal G}^{(B)}_{M,\partial D} (u_0, u_1))^+$ the restriction 
of the potential ${\mathcal G}^{(B)}_{M,\partial D} (u_0, u_1)$ onto $D^+$ and 
similarly for the potential $T_{M,D} (g)$. Obviously, 
$$
\Delta_M ({\mathcal G}^{(B)}_{M,\partial D} (u_0, u_1))^+ = 
\Delta_M  (T_{M,D} (g))^+ =0 \mbox{ in } D^+
$$
as a parameter dependent integral. 

\begin{theorem} \label{t.Cauchy.M}
 Let $s \in \mathbb N$, $s\geq 2$,  $\partial D\in C^s$ 
and the matrix $M$ have real analytic entries and satisfy \eqref{eq.M.pos}. 
If $\partial D \setminus S$ has at least one interior point in the relative 
topology then Problem \ref{pr.Cauchy.M} is densely solvable. 
If $S$ is a relatively open subset of $\partial D$ with a smooth boundary $\partial S$ then
Cauchy Problem \ref{pr.Cauchy.M} has no more than one solution. It 
is solvable if and only if there is 
a function ${\mathcal F}\in H^s (G)$ satisfying 
$\Delta_M {\mathcal F} = 0$ in  $G$
and such that 
$$
{\mathcal F}=({\mathcal G}^{(B)}_{M,\partial D} (\tilde u_0, \tilde u_1))^+ 
+ (T_{M,D} g)^+ \mbox{ in } D^+.
$$
Besides, the solution $u$, if exists, is given by the following formula
\begin{equation} \label{eq.sol.Cauchy}
u  = {\mathcal G}^{(B)}_{M,\partial D} (\tilde u_0, \tilde u_1) +  T_{M,D} g - 
{\mathcal F} \mbox{ in } D.
\end{equation}
\end{theorem}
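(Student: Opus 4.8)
The plan is to break Theorem~\ref{t.Cauchy.M} into its three assertions: (i) density of the range under the weak assumption that $\partial D\setminus S$ has an interior point; (ii) uniqueness under the stronger assumption that $S$ is relatively open with smooth boundary; and (iii) the solvability criterion together with the representation formula~\eqref{eq.sol.Cauchy}. I would treat (ii) first, since it is the part most directly relevant to the electrocardiography application and the cleanest to argue. Suppose $u\in H^s(D)$ solves the homogeneous Cauchy problem, i.e. $\Delta_M u = 0$ in $D$ and $B_0u = B_1u = 0$ on $S$. Since $M$ has real-analytic entries and satisfies~\eqref{eq.M.pos}, $u$ is real-analytic in $D$. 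Extending $u$ by zero across $S$ into $D^+$ and using the Green formula $\chi_D u = {\mathcal G}^{(B)}_{M,\partial D}(B_0u,B_1u) + T_{M,D}(\Delta_M u)$ together with the fact that both potentials with data supported on $\partial D\setminus S$ extend harmonically (in the $\Delta_M$ sense) across $S$, one sees that the zero extension is a solution of $\Delta_M(\cdot)=0$ in $D\cup S\cup D^+$; then the uniqueness of analytic continuation (equivalently, the Holmgren-type uniqueness for the elliptic operator $\Delta_M$, valid because of real-analyticity) forces $u\equiv 0$ in $D$. The delicate point is to justify that the single-layer/double-layer type potentials built from $\varphi_M$ with densities living only on $\partial D\setminus S$ are genuinely real-analytic (hence $\Delta_M$-null) in a full neighborhood of $S$; this is where I expect to invoke \cite{Tark36} and the jump relations for the Green operator ${\mathcal G}^{(B)}_{M,\partial D}$.

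Next I would handle (iii), the solvability criterion and formula~\eqref{eq.sol.Cauchy}. For necessity: if $u$ solves the Cauchy problem, set ${\mathcal F} := {\mathcal G}^{(B)}_{M,\partial D}(\tilde u_0,\tilde u_1) + T_{M,D}g - \chi_D u$ on $G$; on $D$ this equals ${\mathcal G}^{(B)}_{M,\partial D}(\tilde u_0,\tilde u_1) + T_{M,D}g - u$, and subtracting the Green formula $\chi_D u = {\mathcal G}^{(B)}_{M,\partial D}(B_0u,B_1u)+T_{M,D}(\Delta_M u)$ shows the jump of ${\mathcal F}$ (and of its conormal derivative) across $S$ vanishes, because $B_0u=\tilde u_0$, $B_1u=\tilde u_1$, $\Delta_M u=g$ there; hence ${\mathcal F}$ glues to an $H^s(G)$ function with $\Delta_M{\mathcal F}=0$, and on $D^+$ it equals the asserted combination of the $(\cdot)^+$ potentials since $\chi_D u$ vanishes there. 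For sufficiency: given such an ${\mathcal F}$, \emph{define} $u$ by~\eqref{eq.sol.Cauchy} in $D$; then $\Delta_M u = \Delta_M T_{M,D}g = g$ in $D$ (the Green-potential property of $T_{M,D}$, with the two other terms $\Delta_M$-null in $D$), and one must verify $B_0u = u_0$, $B_1u = u_1$ on $S$. This last verification is the technical crux: it amounts to reading off the Dirichlet data of ${\mathcal G}^{(B)}_{M,\partial D}(\tilde u_0,\tilde u_1)+T_{M,D}g$ from the boundary side and matching them against those of ${\mathcal F}$ using the hypothesis ${\mathcal F} = (\,\cdot\,)^+$ in $D^+$ — i.e. the two-sided limits on $S$ of the potentials agree because their $D^+$-traces agree and ${\mathcal F}$ is $\Delta_M$-null across $S$. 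I expect to lean on the Green formula for $H^2(D)$ displayed just before the theorem, applied after a density argument from $H^s$ to $H^2$.

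Finally, for (i), the density statement, I would argue that the range of the Cauchy operator is dense in the relevant product space precisely when a suitable adjoint/annihilator is trivial, and that this annihilator can be identified with a space of $\Delta_M$-null functions on the exterior component vanishing together with conormal derivative on an open subset of $\partial D^+$ adjacent to $\partial D\setminus S$; the assumed interior point of $\partial D\setminus S$ lets one again apply analytic-continuation uniqueness to kill such functions. Concretely, this is the standard duality between density of the Cauchy data map and uniqueness for a "dual" Cauchy problem, and it is exactly the mechanism exploited in \cite{Tark36} and \cite{FeSh14}; I would cite those for the abstract functional-analytic skeleton and only spell out the identification of the annihilator and the application of Holmgren uniqueness. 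The main obstacle throughout is bookkeeping the jump relations of the $\varphi_M$-potentials in the Sobolev scale $H^s$ (rather than in Hölder or $L^2$), and making sure the "dummy" exterior domain $D^+$ and the extensions $\tilde u_0,\tilde u_1$ do not affect the intrinsic statement; I would isolate that as a single lemma on potentials and otherwise keep the proof structural.
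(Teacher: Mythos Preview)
Your sketch is correct and follows precisely the approach of the references the paper defers to: the paper does not prove Theorem~\ref{t.Cauchy.M} in-text but simply cites \cite[Theorems~2.8 and~5.2]{ShTaLMS} for $g=0$ and \cite{FeSh14} for $g\ne 0$, and the potential-theoretic argument you outline (Green representation, jump relations, Holmgren-type unique continuation for the real-analytic operator $\Delta_M$, and the duality argument for dense solvability) is exactly the machinery developed in those works. In short, you have reconstructed the proof the paper chose not to reproduce.
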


\begin{proof} See, for instance, \cite[Theorems 2.8 and 5.2]{ShTaLMS} for the case $g=0$ and \cite{FeSh14} for $g\ne 0$.
\end{proof}

At the end of the section we give some information about parabolic theory. 
With this purpose, let $\Omega_T$ be the cylinder domain $\Omega \times (0,T)$ with the base 
$\Omega$ and the time interval $(0,T)$. Let us denote by 
 $H^{2s,s} (\Omega_T)$, $s \in  {\mathbb Z}_+$, 
anisotropic (parabolic) Sobolev spaces, see, for instance, 
\cite{LadSoUr67}, i.e. the set of such measurable functions 
 $u$ on  $\Omega_T$ that the partial derivatives 
$\partial^j_t \partial^{\alpha}_x u$ 
belong to the Lebesgue space $L^{2} (\Omega_T)$ 
for all multi-indexes $(\alpha,j) 
\in {\mathbb Z}_+^{n} \times {\mathbb Z}_+$ satisfying $|\alpha|+2j \leq 2s$. 
This is a Hilbert space with the natural inner product $(u,v)_{H^{2s,s} (\Omega_T)}$. 
%\begin{equation} \label{eq.Hs}
%(u,v)_{H^{2s,s} (\Omega_T)} = \sum_{|\alpha|+2j \leq 2s}
%\int_{\Omega_T} \partial^j_t \partial^{\alpha}_x v 
%(x,t) \, \partial^j_t \partial^{\alpha}_x u (x,t) dx dt.
%\end{equation}
%We may also define $H^{2s,s} (\Omega_T)$ as the completion of the linear 
%space  $C^{2s,s} (\overline{\Omega_T} )$ with respect to the norm   
%$\|\cdot \|_{H^{2s,s} (\Omega_T)}$ induced by the inner product
%\eqref{eq.Hs}. In particular, if $s=0$ then  $H^{0,0} (\Omega_T) = 
%L^{2} (\Omega_T)$. 

We will also use the so-called Bochner spaces of functions depending on 
$(x,t)$ over $\Omega_T$. Namely, if $\mathcal B$ is a Banach space (possibly, a space 
of functions over $\Omega$) and     $p \geq 1$, we denote by  
$L^p ([0,T],{\mathcal B})$ the Banach space of 
measurable maps   $u : [0,T] \to {\mathcal B}$ with the standard norm, 
%$
%   \| u \|_{L^p ([0,T],{\mathcal B})}
% := \| \|  u (\cdot,t) \|_{\mathcal B} \|_{L^p ([0,T])}
%$, 
see, for instance, \cite[Ch. \S 1.2]{Lion69}. 
 
Similarly to elliptic theory, one use often integral representations 
in parabolic theory, too. 
Consider the differential  operator ${\mathcal L}_M = \partial_t + \Delta_M$ and 
a more general differential  operator 
$$
{\mathcal L} = {\mathcal L}_M + \sum_{j=1}^n a_j (x) \partial_j + a_0 (x)
$$
with variable coefficients $a_j (x)$, $0\leq j\leq n$. 
As the operator $\Delta_M$ is strongly elliptic then the operator 
${\mathcal L}$ is parabolic, see, for instance, \cite{Mikh}, \cite{frid}. 
In the particular case $M=I$, we obtain the heat operator ${\mathcal L}_{M}
= \partial_t - \Delta $. 

Under rather mild assumptions on the coefficients $M$ and $a_j$, $0\leq j\leq n$, 
the parabolic differential operator ${\mathcal L}$ admits a fundamental solution 
$\Psi_{\mathcal L}$. In particular, it is the case if the coefficients are constant 
or real analytic and bounded at the infinity, see, 
for instance, \cite[\S 1.5, Theorem 2.8]{eid} or \cite[Ch.1, \S 1--5]{frid}.

\begin{example} \label{ex.fs.heat}
Let $M$ be a non-degenerate matrix with real constant entries.
If we denote by $M^{-1}$ the inverse matrix for $M$ then   the kernel
\begin{equation*}
\Psi_{{\mathcal L}_M}(x,y,t,\tau)=
\begin{cases}
\frac{e^{-\frac{(x-y)^T M^{-1} (x-y)}{4 ( t-\tau)}} }{\left(2\sqrt{\pi {\rm det} (M) \, (t-\tau) }\right)^n } & 
\mbox{ if } t>\tau,\\ 0 & \mbox{ if } t\leqslant   \tau ,
\end{cases} 
\end{equation*} 
is the 
the fundamental solution to the operator ${\mathcal L}_M$, see 
\cite[Ch.1, \S 2]{frid}.
\end{example}

Assume that the the parabolic differential operator ${\mathcal L}$ admits a fundamental solution 
$\Psi_{\mathcal L}$. Denote  by $S$ a relatively open subset of $\partial\Omega$ 
and set $S_T = S \times (0,T)$. For functions 
$g \in L^2 (\Omega_{T})$, $w \in L^2 ([0,T], H^{3/2} (\partial \Omega))$, $v \in L^2 ([0,T], 
H^{1/2} (\partial \Omega))$, $h \in H^{1/2}(\Omega)$ we introduce the following potentials:  
\begin{equation*} 
I_{\Omega} (h) (x,t)= \int\limits_{\Omega}\Psi_{\mathcal L}(x,y, t,0)h(y) dy,  
$$
$$
G_{\Omega} (f) (x,t)=\int\limits_{0}^t\int\limits_\Omega \Psi_{\mathcal L}(x,y,\ t,\tau)
g(y, \tau)dy d\tau, 
\end{equation*}
\begin{equation*} 
V_{S} (v) (x,t)=\int\limits_{0}^t\int\limits_{S} \tilde B_0 (y)
\Psi_{\mathcal L}(x,y, t,\tau) v(y, \tau)
ds(y)d\tau, 
\end{equation*}
\begin{equation*} 
W_{S} (w) (x,t)= - \int\limits_{0}^t\int\limits_{S} \tilde B_1 (y) 
\Psi_{\mathcal L}(x,y, 
t,\tau)w(y, \tau)ds(y) d\tau, 
\end{equation*}
(see, for instance,  
\cite[Ch. 1, \S 3 and Ch. 5, \S 2]{frid}), \cite[Ch. 6, \S 12]{svesh}m where 
$\tilde B = (\tilde B_0, \tilde B_1)$ is the dual Dirichlet pair 
for the elliptic operator 
$$
\Delta_M + \sum_{j=1}^n a_j  \partial_j + a_0 
$$ 
and the Dirichlet pair $B=(1, \partial _{\nu,M})$, see Lemma \ref{eq.dual.Dir}. 
The potential $I_{\Omega} (h)$ is sometimes called \emph{Poisson type integral} and 
the functions $G_{\Omega} (g)$,  $V_{S} (v)$, $W_{S} (w)$ are often referred to 
as heat potentials or, more precisely, \emph{Volume Parabolic Potential}, \emph{Single Layer 
Parabolic Potential} and \emph{Double Layer Parabolic 
Potential}, respectively. By the construction, 
all these potentials are (improper) integral depending on the parameters $(x,t)$.

%\label{l.Green.heat} 

The theory of boundary value problem for parabolic operators in cylinder 
domains are closely related to the elliptic theory. However, we consider a non-standard 
Cauchy problem for parabolic operators in cylinder domain $\Omega_T$ 
with the Cauchy data on its lateral surface $\partial\Omega \times [0,T]$, 
see, for instance, \cite{PuSh}, \cite{KuSh}. 

\begin{problem} \label{pr.Cauchy.heat} 
Given  $u_1 \in L^2 ([0,T], H^{3/2} (S))$, 
$u_2 \in L^2 ([0,T], H^{1/2} (S))$, $g \in L^2 (\Omega_T)$, find 
$u \in H^{2,1} (\Omega_T)$, satisfying
\begin{equation*} %\label{eq.Cauchy.heat}
\left\{ \begin{array}{lll}
{\mathcal L} u =g & {\rm in} & \Omega_T,\\
 u = u_1  & {\rm on} & S_T,\\
\partial_{\nu,M} u = u_2  & {\rm on} & S_T.\\
\end{array}
\right.
\end{equation*}   
\end{problem}
This problem is generally ill-posed. It may be treated similarly to 
the ill-posed Cauchy problem for elliptic equations, \cite{KuSh}, \cite{PuSh}. 
Let us fix a domain $\Omega^+$ such that $\Omega\cap \Omega^+=\emptyset$ and the set 
$D = \Omega\cup S \cup \Omega^+$ is
 a piece-wise smooth domain. We denote by 
$(I_{\Omega} (u))^+$ the restriction 
of the potential $I_{\Omega} (u)$ onto $\Omega^+$ and 
similarly for the potentials $W_{S} (w)$, $V_{S} (v)$ and $G_{\Omega} (f)$. Obviously, 
$$
{\mathcal L} (I_{\Omega} (u))^+ = {\mathcal L} (G_{\Omega} (f))^+ =
{\mathcal L} (V_{S} (v))^+ = {\mathcal L} (W_{S} (w))^+ 
 =0 \mbox{ in } (\Omega^+)_T
$$
as a parameter dependent integrals. 

\begin{theorem} \label{t.Cauchy.heat}
 Let $s \in \mathbb N$, $s\geq 2$, and $\partial \Omega\in C^s$. If $S$ is a relatively 
open subset of $\partial \Omega$ with a smooth boundary $\partial S$ then
Cauchy Problem \ref{pr.Cauchy.heat} has no more than one solution. It 
is solvable if and only if there is a function ${\mathcal F}\in H^{2,1} (D_T)$ satisfying 
${\mathcal L} {\mathcal F} = 0$ in  $D_T$ and such that 
$$
{\mathcal F}=(G_{\Omega} (f))^+ 
 + (W_{S} (u_1))^+  +(V_{S} (u_2))^+ \mbox{ in } (\Omega^+)_T.
$$
Besides, the solution $u$, if exists, is given by the following formula
\begin{equation*} %\label{eq.sol.Cauchy.heat}
u  = G_{\Omega} (f)
 + W_{S} (u_1)  +V_{S} (u_2) 
- {\mathcal F}  \mbox{ in } D_T.
\end{equation*}
\end{theorem}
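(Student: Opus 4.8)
The plan is to mimic exactly the strategy used for the elliptic Cauchy Problem \ref{pr.Cauchy.M}, replacing the elliptic Green formula by the parabolic one associated with the fundamental solution $\Psi_{\mathcal L}$. First I would establish \emph{uniqueness}. Suppose $u \in H^{2,1}(\Omega_T)$ solves the homogeneous problem, i.e. ${\mathcal L} u = 0$ in $\Omega_T$ and $u = \partial_{\nu,M} u = 0$ on $S_T$. Applying the parabolic Green formula built from $\Psi_{\mathcal L}$ (the analogue for ${\mathcal L}$ of the identity $\chi_D u = {\mathcal G}^{(B)}_{M,\partial D}(B_0 u, B_1 u) + T_{M,D}(\Delta_M u)$, with the dual Dirichlet pair $\tilde B = (\tilde B_0,\tilde B_1)$ from Lemma \ref{eq.dual.Dir}), one expresses $u$ in $\Omega_T$ as a sum of the volume potential $G_\Omega({\mathcal L} u)$, the Poisson-type term $I_\Omega(u|_{t=0})$, and boundary layer potentials over $\partial\Omega \times (0,T)$; one also needs $u|_{t=0}=0$, which follows from $u$ having zero Cauchy data on $S_T$ together with the backward-uniqueness/analyticity properties of solutions of ${\mathcal L}w=0$, exactly as in \cite{KuSh}, \cite{PuSh}. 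Since the Cauchy data vanish on $S$, the boundary contribution is supported on $\partial\Omega \setminus S$, and the resulting function ${\mathcal F}$ — extending $u$ by these potentials — solves ${\mathcal L}{\mathcal F}=0$ in the larger cylinder $D_T$ with $D = \Omega \cup S \cup \Omega^+$, vanishes in $(\Omega^+)_T$ (the potentials over $\partial\Omega\setminus S$ are smooth across $S$), hence by the uniqueness of the Cauchy problem for the parabolic equation with data on a piece of the lateral boundary of $D_T$, ${\mathcal F}\equiv 0$, so $u\equiv 0$ in $\Omega_T$.

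Next I would prove the \emph{solvability criterion}. If $u$ is a solution, then the same parabolic Green formula gives, in $\Omega_T$,
\[
u = G_\Omega(g) + I_\Omega(u|_{t=0}) + W_S(u_1) + V_S(u_2) + (\text{potentials over } \partial\Omega\setminus S \text{ of the unknown Cauchy data there}).
\]
Restricting to $(\Omega^+)_T$, the left side vanishes (as $u$ lives only on $\Omega_T$), the potential $I_\Omega(u|_{t=0})$ must be absorbed, and one is left with the statement that the function
\[
{\mathcal F} := (G_\Omega(g))^+ + (W_S(u_1))^+ + (V_S(u_2))^+
\]
extends from $(\Omega^+)_T$ to an element of $H^{2,1}(D_T)$ annihilated by ${\mathcal L}$ in all of $D_T$. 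Conversely, given such an ${\mathcal F}$, I would define $u$ by formula $u = G_\Omega(g) + W_S(u_1) + V_S(u_2) - {\mathcal F}$ in $D_T$, check directly using ${\mathcal L}{\mathcal F}=0$ in $D_T$, the jump relations of the single- and double-layer parabolic potentials across $S_T$, and ${\mathcal L}(I_\Omega)=0$, that ${\mathcal L}u = g$ in $\Omega_T$ and that the Cauchy data of $u$ on $S_T$ equal $u_1$, $u_2$ (the potentials $G_\Omega(g)$ and ${\mathcal F}$ being smooth across $S_T$, it is the layer potentials $W_S(u_1)$, $V_S(u_2)$ that produce the prescribed jumps). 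One must also verify $u$ is genuinely defined on $\Omega_T$ — i.e. that it does not leak mass into $\Omega^+$ — which is where the hypothesis ${\mathcal F}=(G_\Omega(g))^+ + (W_S(u_1))^+ + (V_S(u_2))^+$ in $(\Omega^+)_T$ is used, forcing $u\equiv 0$ there.

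The main obstacle I expect is the regularity bookkeeping: one needs the parabolic potentials $G_\Omega$, $V_S$, $W_S$, $I_\Omega$ to map the given data spaces ($g\in L^2(\Omega_T)$, $u_1\in L^2([0,T],H^{3/2}(S))$, $u_2\in L^2([0,T],H^{1/2}(S))$, initial data in $H^{1/2}(\Omega)$) into $H^{2,1}$ of the relevant cylinders, and the trace and co-normal-trace operators to be well-defined and bounded in these anisotropic Sobolev norms, with the correct jump relations on $S_T$; this rests on the mapping properties of parabolic layer potentials from \cite{frid}, \cite{eid}, \cite{svesh}, and on the compatibility of the anisotropic trace theorem with $\partial\Omega\in C^s$. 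The genuinely hard analytic input — that the non-standard parabolic Cauchy problem with data on the lateral surface has at most one solution, and that ${\mathcal F}\equiv 0$ follows from its vanishing on $(\Omega^+)_T$ — is exactly the content imported from \cite{KuSh}, \cite{PuSh}, so modulo citing those the remaining work is the (somewhat lengthy but routine) verification of the potential-theoretic identities and jump relations.
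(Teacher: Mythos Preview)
The paper does not actually prove this theorem: its entire proof reads ``See, for instance, \cite{KuSh}, \cite{PuSh} for similar problems related to various parabolic differential operators in the anisotropic H\"older spaces and Sobolev spaces.'' Your proposal therefore goes well beyond what the paper supplies, giving a genuine outline of the argument that those references presumably contain, and it is modeled (appropriately) on the elliptic Theorem~\ref{t.Cauchy.M} which the paper also proves only by citation.

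Your outline is broadly sound and cites the same sources for the hard analytic core (unique continuation for parabolic equations from lateral Cauchy data). Two points deserve tightening. First, in the uniqueness argument you assert that $u|_{t=0}=0$ follows from vanishing Cauchy data on $S_T$ plus backward-uniqueness; the logic is really the other way around --- one first obtains $u\equiv 0$ in $\Omega_T$ via lateral unique continuation (this is the content of \cite{KuSh}, \cite{PuSh}), and $u|_{t=0}=0$ is then a consequence, not an intermediate step needed to run the Green formula. Second, you correctly flag the Poisson term $I_\Omega(u|_{t=0})$ as an awkward presence that the theorem statement omits, and note it ``must be absorbed''; in the necessity direction this term is indeed swallowed into the extension $\mathcal F$ (since $I_\Omega(h)$ solves $\mathcal L=0$ on both sides of $S$ and lies in $H^{2,1}(D_T)$), but you should say so explicitly rather than leave it hanging. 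With those two clarifications your sketch matches what one expects the cited proofs to contain.
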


\begin{proof} See, for instance, \cite{KuSh}, \cite{PuSh}
for similar problems related to various parabolic 
differential operators  in the anisotropic H\"older spaces and Sobolev spaces.
\end{proof}

\section{A steady bidomain model of the heart}
\label{s.bidomain}

Following a standard scheme (see, for instance, \cite{geselowitz1983}, 
\cite[\S 2.2.2]{2}, or elsewhere) 
let us consider the myocardial domain $\Omega_m$ being surrounded by a volume conductor 
$\Omega_b $. The total domain, including the myocardium and the torso $\Omega = \Omega_b \cup 
\overline \Omega_m$, where $\overline \Omega_m$ is the closure of heart 
domain,  is surrounded by a non-conductive medium (air). We assume that 
 the conductivity matrices $M_i$, $M_e$, $M_b$ 
of the intra-, extracellular and extracardiac media 
satisfy \eqref{eq.M.pos} and that their entries are real analytic 
 in some neighbourhoods of $\overline \Omega_m$ and $\overline \Omega_b$, respectively; 
of course, if we assume that these media are homogeneous and isotropic, then the entries 
will be just constants.

Then the differential operators
$$
\Delta_e = -\nabla \cdot M_e \nabla , \, \Delta_i = -\nabla \cdot M_i \nabla, \, 
\Delta_b = -\nabla \cdot M_b \nabla 
$$
are elliptic and strongly elliptic and admit  the bilateral fundamental solutions, say, 
$\varphi_i $, $\varphi_e $, $\varphi_b  $,  over some neighbourhoods of 
$\overline \Omega_m$ and $\overline \Omega_b$, respectively.

If $u_i, u_e \in H^2 (\Omega_m)$, $u_b \in H^2 (\Omega_b) $ are intra-, extracellular and 
extracardiac potentials, respectively, then the  intra-, extracellular and  extracardiac 
currents are given by
$$
J_i = -M_i \nabla u_i, \, J_e = -M_e \nabla u_e, \, u_b = -M_b \nabla u_b, 
$$
respectively. As the intracellular charge $q_i$ and the extracellular $q_e$ should be 
balanced in the heart tissue, we arrive at the following equations
\begin{equation} \label{eq.balance.charge}
\frac{\partial (q_i+q_e)}{ \partial t} =0,
\end{equation}
\begin{equation} \label{eq.balance.current.i}
-\Delta_i u _i = \frac{\partial q_i}{ \partial t} +\chi I_{\rm ion},
\end{equation}
\begin{equation} \label{eq.balance.current.e}
-\Delta_e u _e = \frac{\partial q_e}{ \partial t} -\chi I_{\rm ion},
\end{equation}
where $I_{\rm ion}$ is the ionic current across the membrane and 
$\chi I_{\rm ion}$ is  ionic current per unit tissue. 
Of course, the potentials $u_e$ and $u_i$ are actually defined 
on different domains: extracellular and intracellular spaces in $\Omega_m$, respectively.
Thus, the last equations reflect the fact that a homogenization procedure 
is at the bottom of the bidomain model. 

Then, combining  \eqref{eq.balance.charge}, \eqref{eq.balance.current.e}
\eqref{eq.balance.current.i} we obtain the conservation law for the total 
current $(J_i+ J_e)$:
$$
\Delta_i u_i + \Delta_e  u_e  = 0 \mbox{ in } \Omega_m.
$$

Next,  denote  by $\nu_i$, $\nu_e$  the outward normal 
vectors to the surfaces of the heart and body volume ($\Omega_m$ and $\Omega_b$), 
respectively. In the heart surrounded by a conductor, the normal component of the total 
current  should be continuous across the boundary of the heart: 
\begin{equation}\label{eq.balance.boundary}
\nu_i \cdot (J_i +J_e) = \nu_i \cdot J_b.
\end{equation}

Taking in account the current behaviour at the torso,  
 we arrive at a steady-state version of the bidomain model \cite{geselowitz1983}:
\begin{align}
\Delta_i u_i + \Delta_e  u_e  = 0 \mbox{ in } \Omega_m, \label{eq7} \\
\Delta_b  u_b  = 0\mbox{ in } \Omega_b ,\label{eq8} \\
	u_e  = u_b   \mbox{ on } \partial \Omega_m , \label{eq9} \\
	\nu_i \cdot (M_e \nabla u_e ) 
	= - \nu_e \cdot (M_b \nabla u_b)\mbox{ on }  \partial \Omega_m  \label{eq10} \\
	\nu_i \cdot (M_i \nabla u_i)  =0 \mbox{ on } \partial \Omega_m,  \label{eq11} \\
	\nu_e \cdot (M_b \nabla u_b)  = 0 \mbox{ on } \partial \Omega,  \label{eq12}
\end{align}
where \eqref{eq10}, \eqref{eq11} are consequences of \eqref{eq.balance.boundary} and 
the assumption that the intracellular domain is completely insulated. 

However, this steady model is often supplemented by an 
evolutionary part. For example,  \eqref{eq.balance.current.i}, 
\eqref{eq.balance.current.e} imply
\begin{equation*} %\label{eq.balance.cable.1}
-\Delta_i u _i + \Delta_e u _e = \frac{\partial (q_i-q_e)}{ \partial t} +2\chi I_{\rm ion}.
\end{equation*}
On the other hand, the transmembrane potenitial $v=u_i-u_e$ satisfies 
\begin{equation*} %\label{eq.balance.cable.2}
u_i-u_e = \frac{1}{2} \frac{q_i-q_e}{\chi \, C_m}
\end{equation*}
where $C_m$ is the capacitance of the cell membrane. 
Thus, using the last two identities 
%\eqref{eq.balance.cable.1} and \eqref{eq.balance.cable.2} 
we arrive 
at the so-called cable equation
\begin{equation} \label{eq.balance.cable.0}
\frac{1}{2\chi}\Big(-\Delta_i u _i + \Delta_e u _e\Big) = 
 C_m \frac{\partial (u_i-u_e)}{ \partial t} + I_{\rm ion} 
\mbox{ in } \Omega_m \times (0,T),
\end{equation}
see, for instance, \cite[\S 2.2.2]{2}. There are 
other advanced and complicated relations that can be added to the model. 

In this section we will discuss the steady part of the bidomain model only, 
considering the following problem:

\begin{problem} \label{pr.inverse.inside}
Let the values of electrical 
potential $u_b$ on the boundary of the body domain be known:
\begin{equation} 
	u_b = f \mbox{ on }  \partial \Omega,
	\label{eq6}
\end{equation}
where $f\in H^{3/2} (\partial \Omega)$. Under these conditions we seek for the intracellular 
potential $ u_i \in H^{2} (\Omega_m)$ and extracellular potential 
$u_e \in H^{2} (\Omega_m)$ and extracardiac potential $u_b\in H^{2} (\Omega_b)$ 
satisfying equations \eqref{eq7}, \eqref{eq8} and 
 boundary conditions \eqref{eq9}-\eqref{eq12}.
\end{problem}

The non-uniqueness of solutions to Problem \ref{pr.inverse.inside} 
was established in \cite{2K} in specially constructed 
Hardy type spaces under the following %rather 
restrictive assumptions:

1) all the media are homogeneous and isotropic;

2) the matrices $M_i$, $M_e$,  
are proportional, i.e. 
\begin{equation} 
\label{eq.prop}
M_e = \lambda M_i \mbox{ with some positive number } \lambda.
\end{equation}
 In particular, this means that 
a linear change of variables reduces the consideration to the situation 
where 
\begin{equation} \label{eq.scalar}
\Delta_i = -\sigma_i \Delta, \, \Delta_e = -\sigma_e \Delta, \, \lambda= \frac{\sigma_e}{\sigma_i}
\end{equation}
and $\sigma_i $, $\sigma_e $, 
are positive numbers characterizing the 
 electrical conductivity of the corresponding media. 

Let us describe the null-space of Problem \ref{pr.inverse.inside} in 
a more general situation. With this purpose,  we use the following calibration 
assumption that always is achievable for isotropic conductivity:
there is a constant $c_0$ such that 
\begin{equation} \label{eq.calibration}
\int_{\partial \Omega_m} (u_i + c_0 u_e)(y) d\sigma (y) =0.
\end{equation}

\begin{proposition} \label{p.bidomain.null}
The null-space of Problem \ref{pr.inverse.inside} consists of all the triples $u_i, u_e
\in H^2 (\Omega_m)$, $u_b \in H^2 (\Omega_b)$  satisfying the following conditions: 
\begin{equation} \label{eq.nullspace.inside}
	\left\{ 
	\begin{array}{ccccc}
	u_b  & = & 0 & {\rm in } & \Omega_b,\\ 
	u_e  & = & u & {\rm in } & \Omega_m,\\
	 u_i  & = -& {\mathcal N}_i (\Delta_e u,0) +c & {\rm in } & \Omega_m,\\
\end{array}
\right.
\end{equation}
where ${\mathcal N}_i $ is the Neumann operator related to $\Delta_i$, 
$c$ is an arbitrary constant and $u $ is an arbitrary function 
from $ H^2_0 (\Omega_m)$. If 
calibration assumption \eqref{eq.calibration} holds for a pair $u_i$, $u_e$ from 
the null-space then the constant $c$ in \eqref{eq.nullspace.inside} equals to zero.  
\end{proposition}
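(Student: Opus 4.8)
The plan is to characterize the null-space of the (linear) operator of Problem \ref{pr.inverse.inside} by systematically propagating the homogeneous data $f=0$ through the transmission conditions. First I would set $f=0$ in \eqref{eq6}, so $u_b$ solves the homogeneous Neumann problem for $\Delta_b$ in $\Omega_b$ (by \eqref{eq8} and \eqref{eq12}) while simultaneously being the Cauchy extension coming from the heart boundary via \eqref{eq9}, \eqref{eq10}. The key first step is to show $u_b \equiv 0$ in $\Omega_b$. To this end I would observe that from \eqref{eq8}, \eqref{eq12} and $u_b=0$ on $\partial\Omega$, the function $u_b$ solves a mixed/Cauchy-type problem for $\Delta_b$; more directly, integrating $v^*\Delta_b u_b - (\Delta_b v)^* u_b$ by the Green formula of Lemma \ref{eq.dual.Dir} with $v=u_b$ over $\Omega_b$, using \eqref{eq8} to kill the interior term and \eqref{eq12} together with $u_b=0$ on $\partial\Omega$ to kill the outer boundary term, leaves only the heart-boundary contribution $\int_{\partial\Omega_m}(\partial_{\nu,M_b}u_b)^* u_b\,d\sigma$; invoking \eqref{eq10} to rewrite $\partial_{\nu,M_b}u_b$ via the (real, positive-definite) quadratic form $\nabla u_b^T M_b\nabla u_b$ forces $\nabla u_b=0$ in $\Omega_b$, hence $u_b$ is constant, and the constant is $0$ by the boundary condition on $\partial\Omega$. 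This is the main obstacle: making the energy argument clean so that the heart-boundary term has the right sign — in fact this is essentially the uniqueness half of the Cauchy Problem \ref{pr.Cauchy.M} for $\Delta_b$ in $\Omega_b$ with data on $\partial\Omega$, which one may alternatively cite directly once one notes that $u_b$ has vanishing Cauchy data on $\partial\Omega$.

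Once $u_b\equiv0$ in $\Omega_b$, the transmission conditions \eqref{eq9} and \eqref{eq10} become homogeneous Cauchy data for $u_e$ on $\partial\Omega_m$: namely $u_e=u_b=0$ on $\partial\Omega_m$ and $\nu_i\cdot(M_e\nabla u_e)=-\nu_e\cdot(M_b\nabla u_b)=0$ on $\partial\Omega_m$, i.e. $u_e=0$ and $\partial_{\nu,M_e}u_e=0$ on all of $\partial\Omega_m$. Setting $u:=u_e$, the vanishing of the full Cauchy data on the entire boundary means precisely $u\in H^2_0(\Omega_m)$ (this is the characterization of the closure of $C^\infty_{\mathrm{comp}}$ via vanishing trace and normal trace); note we do not get $u\equiv0$ because $u_e$ need not satisfy an equation of its own — only the combination \eqref{eq7} is constrained. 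So $u_e=u$ for an arbitrary $u\in H^2_0(\Omega_m)$, giving the second line of \eqref{eq.nullspace.inside}.

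It remains to solve for $u_i$. From \eqref{eq7} with $u_e=u$ we get $\Delta_i u_i = -\Delta_e u_e = -\Delta_e u$ in $\Omega_m$, and from \eqref{eq11} we get $\partial_{\nu,M_i}u_i = \nu_i\cdot(M_i\nabla u_i)=0$ on $\partial\Omega_m$. This is exactly the interior Neumann Problem \ref{pr.Neu} for $\Delta_i$ with data $(g,u_1)=(-\Delta_e u,0)$; its solvability condition \eqref{eq.Green.M.Neumann} reads $\int_{\partial\Omega_m}0\,d\sigma+\int_{\Omega_m}(-\Delta_e u)\,dx=0$, which holds because $u\in H^2_0(\Omega_m)$ makes $\int_{\Omega_m}\Delta_e u\,dx=\int_{\partial\Omega_m}\partial_{\nu,M_e}u\,d\sigma=0$ by the divergence theorem. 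By Theorem \ref{t.Neumann.M} the solution is unique up to an additive constant and equals $-{\mathcal N}_i(\Delta_e u,0)+c$, which is the third line of \eqref{eq.nullspace.inside}. Conversely, one checks directly that any triple of this form solves all of \eqref{eq7}--\eqref{eq12} with $f=0$, so the description is exact. Finally, for the last assertion: if such a pair $(u_i,u_e)$ satisfies the calibration \eqref{eq.calibration}, substitute $u_i=-{\mathcal N}_i(\Delta_e u,0)+c$ and $u_e=u$; since ${\mathcal N}_i(\Delta_e u,0)$ is normalized by \eqref{eq.Neumann.normilize} to have zero mean over $\partial\Omega_m$, and $\int_{\partial\Omega_m}u\,d\sigma=0$ because $u\in H^2_0(\Omega_m)$ has vanishing trace, the calibration integral reduces to $c\cdot|\partial\Omega_m|=0$, whence $c=0$.
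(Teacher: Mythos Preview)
Your proof follows the same route as the paper's, and the overall structure is correct: uniqueness of the Cauchy problem for $\Delta_b$ forces $u_b\equiv 0$; the transmission conditions then make $u_e\in H^2_0(\Omega_m)$; $u_i$ is determined by the Neumann problem for $\Delta_i$ up to a constant; the converse is checked directly; and the calibration kills the constant. This is exactly the paper's argument (the paper invokes Theorem~\ref{t.Cauchy.M} for $u_b\equiv 0$ and \cite{HedbWolf1} for the $H^2_0$ characterization).

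The one point that needs correction is the energy argument you sketch for $u_b\equiv 0$. Applying the Green formula of Lemma~\ref{eq.dual.Dir} with $v=u_b$ yields the trivial identity $0=0$, since the integrand $v^*\Delta_b u_b-(\Delta_b v)^* u_b$ vanishes pointwise. If instead you use the first Green identity, you obtain
\[
\int_{\Omega_b}\nabla u_b^{T} M_b\nabla u_b\,dx \;=\; \int_{\partial\Omega_m} u_b\,\partial_{\nu_e,M_b}u_b\,d\sigma,
\]
but invoking \eqref{eq10} and \eqref{eq9} turns the right-hand side into $-\int_{\partial\Omega_m} u_e\,\partial_{\nu_i,M_e}u_e\,d\sigma$, which has no a priori sign because $u_e$ satisfies no equation of its own in $\Omega_m$ (only the coupled equation \eqref{eq7}). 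So the energy approach does not close. Your fallback---citing uniqueness for Cauchy Problem~\ref{pr.Cauchy.M} directly, since $u_b$ has zero Cauchy data on all of $\partial\Omega$---is the correct step and is precisely what the paper does.
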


\begin{proof} Indeed, let the triple $(u_b, u_i,u_e) \in 
H^2 (\Omega_b) \times H^2 (\Omega_m) \times H^2 (\Omega_m)  $ belong to 
the null-space of Problem \ref{pr.inverse.inside}. Hence 
$f\equiv 0$ on $\partial \Omega$ and then 
$u_b \equiv 0$ in $\Omega_b$ because of  Theorem \ref{t.Cauchy.M} 
for the Cauchy problem. Of course, using \eqref{eq9}, \eqref{eq10}, we obtain 
$$
u _e =u_b = \nu_i \cdot (M_e \nabla u_e) =- \nu_e \cdot (M_b \nabla u_b) =0 
\mbox{ on } \partial \Omega _m 
$$ 
for $u_e \in H^2 (\Omega_m)$. Then, according to \cite{HedbWolf1}, 
$u_e \in H^2_0 (\Omega_m)$. The function $u_i$ satisfies \eqref{eq7} and \eqref{eq11} and 
then, according to Theorem \ref{t.Neumann.M}, 
this means precisely 
$$
u_i =- {\mathcal N}_i (\Delta_e u_e,0) +c
$$
 with 
an arbitrary constant $ c$. 

Thus, any triple  $(u_b, u_i,u_e) \in 
H^2 (\Omega_b) \times H^2 (\Omega_m) \times H^2 (\Omega_m)  $, belonging to 
the null-space of Problem \ref{pr.inverse.inside}, has the form 
as in \eqref{eq.nullspace.inside} with a  constant $c$ and a
function $ u=u_e\in H^2_0 (\Omega_m)$. 

Let a triple  $(u_b, u_i,u_e) \in 
H^2 (\Omega_b) \times H^2 (\Omega_m) \times H^2 (\Omega_m)  $ have the form 
as in \eqref{eq.nullspace.inside} with an arbitrary  constant $c$ and an arbitrary 
function $ u\in H^2_0 (\Omega_m)$. 

Then, obviously, $f\equiv 0$ on $\partial \Omega$. 
Moreover, using Green formula \eqref{eq.Green.M.B}, 
%\eqref{eq.Green.M.one}, 
we easily obtain
$$
-\int_{\Omega_m}\Delta_e u dy = \int_{\partial \Omega_m} 1 \, \nu_e \cdot  M_e \nabla u 
d \sigma =0
$$
because $u \in H^2 _0 (\Omega_m)$. Hence Theorem \ref{t.Neumann.M} implies that 
 there is a potential $w$ satisfying Neumann Problem \ref{pr.Neu} for 
the operator $\Delta_i$:
\begin{equation*} 
\left\{ \begin{array}{lll}
\Delta_i w =- \Delta_e u & {\rm in} & \Omega_m,\\
\nu_i  \cdot M_i \nabla w= 0  & {\rm on} & \partial\Omega_m.\\
\end{array}
\right.
\end{equation*}
According to Theorem \ref{t.Neumann.M}, 
the general form of such a solution is precisely 
\begin{equation} \label{eq.sol.ui}
u_i =- {\mathcal N}_i (\Delta_e u_e,0) +c
\end{equation}
with a constant $c$. 

If we take $u_i = w$ then 
\begin{equation*} %\label{eq.nullspace.inside.1}
	\left\{ 
	\begin{array}{ccccc}
	 u_b  & = & 0 & {\rm in} & \Omega_b,\\
	 u_e  & = & u & {\rm in} & \Omega_m,\\
	\nu_i \cdot (M_e \nabla u_e)  & = & 0 & {\rm on} & \partial \Omega_m,\\ 
	u_b  & = & 0 & {\rm on} & \partial \Omega_m,\\
	\Delta_i u_i  & = & -\Delta_e u & {\rm in} & \Omega_m,\\
	\nu_i \cdot (M_i \nabla u_i)  & = & 0 & {\rm on} & \partial \Omega_m.\\ 
\end{array}
\right.
\end{equation*}

Thus, any triple  $(u_b, u_i,u_e) \in 
H^2 (\Omega_b) \times H^2 (\Omega_m) \times H^2 (\Omega_m)  $ having the form 
as in \eqref{eq.nullspace.inside} with a arbitrary  constant $c$ and an arbitrary 
function $ u\in H^2_0 (\Omega_m)$ belongs to the 
null-space of Problem \ref{pr.inverse.inside}. 

Finally, if calibration assumption \eqref{eq.calibration} is fulfilled then, 
as $u_e =u \in H^2 _0(\Omega_m)$, condition \eqref{eq.Neumann.normilize} yields 
$$
0=\int_{\partial \Omega_m} (u_i + c_0 u_e)(y) d\sigma (y) 
=\int_{\partial \Omega_m} (-{\mathcal N}_i (\Delta_e u_e,0) +c  ) d\sigma (y) = 
 c \int_{\partial \Omega_m} d\sigma (y).
$$
Since the area of the surface $\partial \Omega_m$ is not zero, we conclude that $c=0$. 
\end{proof}

We note that 
a closely related result is presented in \cite[Appendix A]{Nielsen2007}, but it is  
formulated in terms of the transmembrane potential 
$v=u_i-u_e$ instead of the intracellular voltage. 

We are ready to formulate an existence theorem for the bidomain model above.

\begin{theorem} \label{t.bidomain.exists}
Given $f \in H^{3/2} (\partial \Omega_b)$, admitting 
the solution $u_b\in H^2 (\Omega_b)$  to \eqref{eq8}, \eqref{eq12} and \eqref{eq6}, 
there are functions $u_e,u_i\in H^2 (\Omega_m)$
satisfying \eqref{eq7},\eqref{eq9}, \eqref{eq10}, \eqref{eq11}.
Moreover, if calibration assumption \eqref{eq.calibration} holds for a pair $u_i$, $u_e$ 
 then the constant $c$ in \eqref{eq.sol.ui} is uniquely defined by 
\begin{equation}  \label{eq.const}
c= -c_0\Big(\int_{\partial \Omega_m}  d\sigma (y) \Big)^{-1}
\int_{\partial \Omega_m}  u_b (y) d\sigma (y) . 
\end{equation}
\end{theorem}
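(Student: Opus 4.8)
The plan is to construct the triple $(u_b,u_i,u_e)$ explicitly, reusing the machinery already assembled, and then to pin down the calibration constant by a direct integration. First I would take the given $u_b\in H^2(\Omega_b)$ solving \eqref{eq8}, \eqref{eq12} and \eqref{eq6}; its existence is an hypothesis, so nothing has to be proved there. The boundary trace $u_b|_{\partial\Omega_m}\in H^{3/2}(\partial\Omega_m)$ and the conormal derivative $\nu_e\cdot(M_b\nabla u_b)|_{\partial\Omega_m}\in H^{1/2}(\partial\Omega_m)$ are then at our disposal as Cauchy data on the heart surface.

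Next I would produce $u_e$. Prescribe $u_e=u_b$ on $\partial\Omega_m$ (this is exactly \eqref{eq9}) and impose the Neumann-type condition $\nu_i\cdot(M_e\nabla u_e)=-\nu_e\cdot(M_b\nabla u_b)$ on $\partial\Omega_m$, which is \eqref{eq10}. Since we want $u_e$ to be a genuine $H^2(\Omega_m)$ function realizing both a Dirichlet and a conormal datum on the \emph{whole} boundary $\partial\Omega_m$, the natural step is to invoke Corollary \ref{c.Dirichlet.M}: solve the Dirichlet problem $\Delta_e u_e=g_e$ in $\Omega_m$, $u_e=u_b$ on $\partial\Omega_m$, where the interior source $g_e\in H^0(\Omega_m)$ is still free. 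The extra conormal requirement \eqref{eq10} is then one scalar function on $\partial\Omega_m$ to be matched; one convenient route is to first choose any $H^2$-extension of the Cauchy pair $(u_b|_{\partial\Omega_m},\,-\nu_e\cdot(M_b\nabla u_b)|_{\partial\Omega_m})$ into $\Omega_m$ (possible because $\{1,\partial_{\nu,M_e}\}$ is a Dirichlet pair and the Trace Theorem is surjective), call it $\tilde u_e$, set $g_e:=\Delta_e\tilde u_e$, and let $u_e$ be that extension; then \eqref{eq9} and \eqref{eq10} hold by construction and $u_e\in H^2(\Omega_m)$.

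With $u_e$ in hand, $u_i$ is obtained from \eqref{eq7} and \eqref{eq11} as an interior Neumann problem for $\Delta_i$:
\begin{equation*}
\left\{\begin{array}{lll}
\Delta_i u_i=-\Delta_e u_e & {\rm in} & \Omega_m,\\
\nu_i\cdot(M_i\nabla u_i)=0 & {\rm on} & \partial\Omega_m.
\end{array}\right.
\end{equation*}
Here the solvability condition \eqref{eq.Green.M.Neumann} of Theorem \ref{t.Neumann.M} must be verified: it reads $\int_{\Omega_m}(-\Delta_e u_e)\,dx=0$, i.e. $\int_{\partial\Omega_m}\nu_i\cdot(M_e\nabla u_e)\,d\sigma=0$ by Green's formula \eqref{eq.Green.M.B}; by \eqref{eq10} this equals $-\int_{\partial\Omega_m}\nu_e\cdot(M_b\nabla u_b)\,d\sigma$, which vanishes because $\Delta_b u_b=0$ in $\Omega_b$, $\nu_e\cdot(M_b\nabla u_b)=0$ on $\partial\Omega$ (\eqref{eq12}), and Green's formula applied on $\Omega_b$. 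So Theorem \ref{t.Neumann.M} applies and gives $u_i=-{\mathcal N}_i(\Delta_e u_e,0)+c$ with $c$ an arbitrary constant; this is \eqref{eq.sol.ui}. I expect this solvability check to be the main (mild) obstacle — everything else is a direct appeal to the quoted existence theorems.

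Finally, under calibration \eqref{eq.calibration}, I would determine $c$. Since ${\mathcal N}_i(\Delta_e u_e,0)$ satisfies the normalization \eqref{eq.Neumann.normilize}, integrating $u_i+c_0u_e$ over $\partial\Omega_m$ and using $u_e=u_b$ on $\partial\Omega_m$ gives
\begin{equation*}
0=\int_{\partial\Omega_m}(u_i+c_0u_e)\,d\sigma
=c\int_{\partial\Omega_m}d\sigma+c_0\int_{\partial\Omega_m}u_b\,d\sigma,
\end{equation*}
and solving for $c$ yields exactly \eqref{eq.const}. This completes the argument, the only nonroutine point being the Neumann compatibility computation outlined above.
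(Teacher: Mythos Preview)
Your proposal is correct and follows essentially the same route as the paper: extend the Cauchy data on $\partial\Omega_m$ to an $H^2(\Omega_m)$ function $u_e$ via surjectivity of the trace map for the Dirichlet pair $\{1,\partial_{\nu,M_e}\}$ (the paper quotes this as a separate lemma), verify the Neumann compatibility $\int_{\Omega_m}\Delta_e u_e\,dx=0$ by Green's formula together with \eqref{eq8}, \eqref{eq10}, \eqref{eq12}, apply Theorem~\ref{t.Neumann.M} to obtain $u_i=-\mathcal N_i(\Delta_e u_e,0)+c$, and fix $c$ from \eqref{eq.calibration} and \eqref{eq.Neumann.normilize}. The only substantive addition in the paper is that, rather than invoking an abstract $H^2$-extension, it realizes $u_e$ concretely as the solution of a fourth-order Dirichlet problem $Q u_e=g$ (e.g.\ $Q=\Delta_e^2$) with data \eqref{eq9}, \eqref{eq10}, yielding an explicit Green-kernel formula for $u_e$; this is not needed for the existence statement itself but feeds into the subsequent corollary and the numerical scheme.
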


\begin{proof} We begin with the well known lemma. 

\begin{lemma} \label{l.Dir.right}
Let $\partial D\in C^s$, $s\geq m$ and 
$B=\{B_0,B_1,\dots, B_{m-1}\}$ be a Dirichlet system of order $(m-1)$ 
on $\partial D$. Then for each set $\oplus_{j=0}^{m-1} 
u_j \in \oplus_{j=0}^{m-1} H^{s-j-1/2} (\partial D)$
there is a function $u \in H^s (D)$ such that 
\begin{equation*} %\label{eq.Dir.right}
\oplus_{j=0}^{m-1} B_j u =\oplus_{j=0}^{m-1}u_j \mbox{ on }\partial D.
\end{equation*}
%Such a function is unique, if it is 
% orthogonal to $H^s (D) \cap H^m_0 (D) $.
\end{lemma}

\begin{proof} See, for instance, \cite[Lemma 5.1.1]{Roit96}. 
%For the existence see, for instance, \cite[Lemma 5.1.1]{Roit96}.
%Of course, there are infinitely many solutions to \eqref{eq.Dir.right}.
%However, according to \cite{HedbWolf1} the kernel of the operator 
%\begin{equation} \label{eq.trace.B}
%(\oplus_{j=0}^{m-1}B_j)_s: H^s (D) \to \oplus_{j=0}^{m-1} H^{s-j/2} (\partial D).
%\end{equation}
%coincides with $H^s (D) \cap H^m _0 (D)$. As $H^s (D) \cap H^m _0 (D)$ is a closed 
%subspace in the space $H^s (D)$, using Direct Sum Theorem  we see that 
%$$
%H^s (D) = (H^s (D) \cap H^m _0 (D)) \oplus (H^s (D) \cap H^m _0 (D)) ^{\bot}
%$$
%where $\Sigma^\bot$ is the orthogonal complement of a set $\Sigma\subset H^s (D) $.
%Imposing the orthogonality conditions providing the uniqueness of the solution, 
%one may define the so-called right inverse operator 
%\begin{equation} \label{eq.Dir.right.inv}
%\Big(\oplus_{j=0}^{m-1} B_j\Big)^{-1}_{r,\bot}: 
%\oplus_{j=0}^{m-1} H^{s-j-1/2} (\partial D) \to (H^s (D) \cap H^m _0 (D)) ^{\bot} \subset 
%H^s (D).
%\end{equation}
%Note that one may define the right inverse operator in different ways!
\end{proof}

As $u_b\in H^2 (\Omega_b)$  we see that 
$u_b \in H^{3/2} (\partial \Omega_b) $, $\nu_e \cdot (M_b \nabla u_b) \in 
 H^{1/2} (\partial \Omega_b) $.  Applying Lemma \ref{l.Dir.right} for 
the operator $\Delta_M$ and the Dirichlet pair 
$B=\{B_0=1, B_1= \partial _{\nu M} \}$, we may find 
a function $u_e \in H^2 (\Omega_m)$ satisfying \eqref{eq9}, \eqref{eq10}.
For example, one may take $u_e$ as 
the unique solution $u\in H^2 (\Omega_m)$ to Dirichlet problem
\begin{equation} \label{eq.Dirichlet.Laplacian.Q}
\left\{ \begin{array}{lll}
Q u =g & {\rm in} & \Omega_m,\\
 u = u_b & {\rm on} & \partial \Omega_m,\\
\nu_i \cdot (M_e \nabla u ) 
	= - \nu_e \cdot (M_b \nabla u_b)  &  \rm{on} &  \partial \Omega_m   \\
\end{array}
\right.
\end{equation}
with an arbitrary function $g\in H^{-2} (\Omega_m)$ and an arbitrary strongly formally 
non-negative elliptic operator $Q$ of the fourth order and with real analytic 
coefficients in a neighbourhood of $\overline \Omega_m$ (see Theorem \ref{t.Dirichlet.M}). 
In particular, under this assumptions Problem \eqref{eq.Dirichlet.Laplacian.Q}
admits a unique Green function, say ${\mathcal G}(x,y)$ and thus $u_e$ may be expressed via an 
integral formula  
\begin{equation} \label{eq.sol.ue}
u_e (x) = \int_{\partial \Omega_m} \Big( B_3 (y)  {\mathcal G} (x,y) u_b (y) -
 B_2 (y) {\mathcal G} (x,y)  \nu_e \cdot (M_b \nabla u_b) (y) \Big) d\sigma + 
\end{equation}
\begin{equation*}
\int_{\Omega_m} {\mathcal G}(x,y) g(y) \, dy , \, x\in \Omega_m, 
\end{equation*}
 where  $(1, \nu_i \cdot (M_e \nabla ), B_2, B_3 )$ 
is a Dirichlet quadruple of the third order satisfying 
\begin{equation*}
 \int_{\partial \Omega_m} \Big( B_3v (y)  u (y) +
 B_2 v(y)  \nu_i \cdot (M_e \nabla u)  \Big) d\sigma =  
\end{equation*}
$$
\int_{\Omega_m} \Big( v (Q u) - (Q^*v) u \Big) dy
$$
for all $u \in H^4 (\Omega_m)$, $v \in H^4 (\Omega_m) \cap 
H^2_0 (\Omega_m)$. 
For example, one may take $Q=\Delta_e^2 $, 
$B_2 = \Delta_e$, $B_3 =  -\nu_i \cdot (M_e \nabla \Delta_e )$
 because
 $\Delta_e=\Delta^*_e$ and hence the operator 
$$
\Delta_e^2 = (\Delta_e)^* \Delta_e
$$ 
is strongly elliptic, formally non-negative and of fourth order. 

 Next, integrating by parts with the use of 
\eqref{eq.Green.M.B}, \eqref{eq8}, \eqref{eq10}, \eqref{eq12},  we obtain
$$
-\int_{\Omega_m} \Delta_e u_e (y) dy = 
\int_{\partial \Omega_m}  \nu_i \cdot (M_e \nabla u _e)d\sigma
= -\int_{\partial \Omega_m}  \nu_i \cdot (M_b \nabla u _b) d\sigma+
$$
$$
\int_{\partial \Omega_b}  \nu _e \cdot (M_b \nabla u _b) d\sigma
= - \int_{\Omega_b} \Delta_b u_b (y) dy =0.
$$
Now Theorem \ref{t.Neumann.M} yields the existence of a function 
$u_i \in H^2 (\Omega_m)$ satisfying 
\begin{equation*} %\label{eq.Neumann.M.i}
\left\{ \begin{array}{lll}
\Delta_i u_i = -\Delta_e u_e & \rm{ in } & \Omega_m,\\
\nu _i \cdot (M_i \nabla u _i)   = 0  & \rm{ on } & \partial \Omega_m.\\
\end{array}
\right.
\end{equation*}
More precisely, Theorem \ref{t.Neumann.M} states that $u_i$ is given by \eqref{eq.sol.ui}
with   an arbitrary constant $ c$. 

Again, if calibration assumption \eqref{eq.calibration} holds for a pair $u_i$, $u_e$ 
 then 
$$
0=\int_{\partial \Omega_m} \Big( c-  {\mathcal N}_i (\Delta_e u_e,0)  +c_0 u_e\Big) d\sigma (y) =
$$
$$
c \, \int_{\partial \Omega_m}  d\sigma (y)  + 
c_0 \int_{\partial \Omega_m} u_b (y) d\sigma (y)  
$$
because of normalising condition \eqref{eq.Neumann.normilize}. 
Thus, the constant $c$ in \eqref{eq.sol.ui} may be uniquely defined by \eqref{eq.const}.
\end{proof}

\begin{example} \label{ex.proportional} 
Of course, in some particular situations we can say much more. For instance,  
\eqref{eq.prop} and \eqref{eq.scalar} are fulfilled, then we have
$$
{\mathcal N}_i (\Delta_e u,0) = \lambda {\mathcal N}_i (\Delta_i u,0) =  \lambda u
$$
for each $u \in H^2_0 (\Omega_m)$. In particular, in this case, 
according to Theorem \ref{t.Neumann.M}, 
\begin{equation} \label{eq.nullspace.inside.prop}
	\left\{ 
	\begin{array}{ccccc}
	 u_e  & = & u & {\rm in} & \Omega_m,\\
	 u_i  & = & -\lambda u + c& {\rm in} & \Omega_m,\\
\end{array}
\right.
\end{equation}
for each pair $u_i$, $u_e$ from 
the null-space of Problem \ref{pr.inverse.inside} where 
$c$ is an arbitrary constant and $u $ is an arbitrary function 
from $ H^2_0 (\Omega_m)$.  Again, if 
calibration assumption \eqref{eq.calibration} holds for a pair $u_i$, $u_e$ from 
the null-space then the constant $c$ in \eqref{eq.nullspace.inside.prop} equals to zero.

As for the Existence Theorem, in this case 
$$
{\mathcal N}_i (\Delta_e u_e,0) = \lambda {\mathcal N}_i (\Delta_i u_e,0) 
=  \lambda u_e -\lambda {\mathcal N}_i (0, \nu_i \cdot (M_b \nabla u _b)).
$$
Thus, formula \eqref{eq.sol.ui} implies 
\begin{equation} \label{eq.sol.ui.prop}
u_i =-\lambda u_e +\lambda {\mathcal N}_i (0, \nu_i \cdot (M_b \nabla u _b)) +c
\end{equation}
where 
$c$ is an arbitrary constant.  Again, if 
calibration assumption \eqref{eq.calibration} holds for the pair $u_i$, $u_e$  
then the constant $c$ may be uniquely defined by \eqref{eq.const}.
\end{example}

Thus, Theorem \ref{t.bidomain.exists} gives a clear path for finding 
the potentials $u_b, u_i, u_e$ and $v$ on the myocardial surface $\partial \Omega_m$:
\begin{enumerate}
\item[(1)] 
given suitable $f \in H^{3/2} (\partial \Omega)$ described in Theorem
\ref{t.Cauchy.M}, find the potential $u_b$ over $\overline \Omega_b$
using formula \eqref{eq.sol.Cauchy} or related formula evoking 
 bases with the double orthogonality property, see
\cite{ShTaLMS} or iteration methods, see \cite{KMF91});
\item[(2)]
choosing suitable  fourth order strongly elliptic operator $Q$  
and function $h\in H^{-2} (\Omega_m)$, find the potential $u_e$ 
with the use of formula \eqref{eq.sol.ue};
\item[(3)]
find the potential $u_i$ 
with the use of formula \eqref{eq.sol.ui};
\item[(4)]
calculate the potential $v=u_i-u_e$ on $\partial \Omega_m$.
\end{enumerate}

From mathematical point of view, 
Proposition \ref{p.bidomain.null}  means 
that the steady part of the bidomain model  
has too many degrees of freedom. More precisely,  
 at least one  equation related to these  
potentials  in $\Omega_m$ is still missing.
 
Thus, staying in the framework of steady models related 
to the elliptic theory,  the proof of Theorem \ref{t.bidomain.exists} suggests to look 
for an additional fourth order strongly elliptic equation
\begin{equation} \label{eq.add}
Q u_e =g \mbox{ in } \Omega_m 
\end{equation}
with a given function $g$ in $\Omega_m$ depending on a patient 
in order to provide the existence and the uniqueness theorem 
for Problem \ref{pr.inverse.inside}.

\begin{corollary} \label{c.bidomain.exists}
Let \eqref{eq.prop} hold true and function $f \in H^{3/2} (\partial \Omega_b)$ admits
the solution $u_b\in H^2 (\Omega_b)$  to \eqref{eq8}, \eqref{eq12} and \eqref{eq6}. 
If $Q$ is a fourth order strongly elliptic operator with smooth 
coefficients over $\overline \Omega_m$  then,
given vector $g \in H^{-2} (\Omega_m)$, problem   
\eqref{eq7}, \eqref{eq9}, \eqref{eq10}, \eqref{eq11}, \eqref{eq.add} has the  
Fredholm property. Moreover, if 
$Q$ is a fourth order formally non-negative strongly elliptic operator 
with real analytic coefficients over $\overline \Omega_m$, then,
given vector $g \in H^{-2} (\Omega_m)$, problem   
\eqref{eq7}, \eqref{eq9}, \eqref{eq10}, \eqref{eq11}, 
\eqref{eq.calibration}, \eqref{eq.add} has 
one and only one solution $(u_i,u_e) \in H^{2} (\Omega_m) \times 
H^{2} (\Omega_m)$.
\end{corollary}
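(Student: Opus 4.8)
The plan is to combine the existence result of Theorem \ref{t.bidomain.exists} with the Fredholm theory for the Dirichlet problem (Theorem \ref{t.Dirichlet.M}) and the description of the null-space from Proposition \ref{p.bidomain.null} (or its sharpened form \eqref{eq.nullspace.inside.prop} valid under \eqref{eq.prop}). First I would set up the problem \eqref{eq7}, \eqref{eq9}, \eqref{eq10}, \eqref{eq11}, \eqref{eq.add} as an operator equation. By Theorem \ref{t.bidomain.exists} the potential $u_b$ on $\partial\Omega_m$, together with its co-normal derivative $\nu_i\cdot(M_b\nabla u_b)$, is fixed once $f$ is given; so the genuine unknowns are the pair $(u_i,u_e)\in H^2(\Omega_m)\times H^2(\Omega_m)$, subject to the elliptic equation \eqref{eq7}, the two boundary conditions \eqref{eq9}--\eqref{eq10} (which couple $u_e$ to the already-known Cauchy data of $u_b$), the Neumann condition \eqref{eq11}, and the extra equation \eqref{eq.add}. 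I would package \eqref{eq.add}, \eqref{eq7}, \eqref{eq11} into a single strongly elliptic boundary value problem for the unknown vector: indeed, $Q u_e = g$ is a fourth-order strongly elliptic equation, while \eqref{eq7} reads $\Delta_i u_i = -\Delta_e u_e$, which given $u_e$ determines $u_i$ up to the Neumann problem for $\Delta_i$, precisely the operator ${\mathcal N}_i$ of Theorem \ref{t.Neumann.M}.

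The core of the argument is then a reduction to the scalar fourth-order Dirichlet problem for $u_e$. Under \eqref{eq.prop}, relation \eqref{eq.sol.ui.prop} shows that $u_i$ is an explicit affine expression in $u_e$ plus the known quantity $\lambda{\mathcal N}_i(0,\nu_i\cdot(M_b\nabla u_b))$ plus a constant $c$; the constant is pinned down by calibration \eqref{eq.calibration} via \eqref{eq.const}. Hence the map $(u_i,u_e)\mapsto(u_e)$ identifies the solution set of \eqref{eq7}, \eqref{eq9}, \eqref{eq10}, \eqref{eq11}, \eqref{eq.calibration}, \eqref{eq.add} with the solution set of the Dirichlet problem
\begin{equation*}
\left\{\begin{array}{lll}
Q u_e = g & {\rm in} & \Omega_m,\\
u_e = u_b & {\rm on} & \partial\Omega_m,\\
\nu_i\cdot(M_e\nabla u_e) = -\nu_e\cdot(M_b\nabla u_b) & {\rm on} & \partial\Omega_m,
\end{array}\right.
\end{equation*}
which is exactly \eqref{eq.Dirichlet.Laplacian.Q} with the Dirichlet pair $\{1,\nu_i\cdot(M_e\nabla\cdot)\}$ completed to a Dirichlet quadruple as in the proof of Theorem \ref{t.bidomain.exists}. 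Theorem \ref{t.Dirichlet.M} applied to this quadruple gives the Fredholm property for general smooth $Q$, proving the first assertion; and when $Q$ is formally non-negative with real analytic coefficients, Theorem \ref{t.Dirichlet.M} (equivalently Corollary \ref{c.Dirichlet.M} in the fourth-order setting) yields existence and uniqueness of $u_e$, hence of the pair $(u_i,u_e)$ after fixing $c$ by \eqref{eq.const}.

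For the uniqueness half it suffices to check that the homogeneous problem has only the trivial solution: if $g=0$ and $f=0$ then $u_b\equiv 0$ by Theorem \ref{t.Cauchy.M}, so $u_e$ solves $Qu_e=0$ in $\Omega_m$ with $u_e=0$ and $\nu_i\cdot(M_e\nabla u_e)=0$ on $\partial\Omega_m$; formal non-negativity of $Q$ and the uniqueness part of Theorem \ref{t.Dirichlet.M} force $u_e\equiv 0$, and then \eqref{eq.sol.ui.prop} together with calibration gives $u_i\equiv 0$. I would then remark that for existence one must verify the compatibility of the data with the Neumann solvability condition \eqref{eq.Green.M.Neumann} for $\Delta_i$ — but this is already done inside the proof of Theorem \ref{t.bidomain.exists} via the integration-by-parts identity showing $\int_{\Omega_m}\Delta_e u_e\,dy=0$, which holds for \emph{any} $u_e$ satisfying \eqref{eq9}, \eqref{eq10} and uses only \eqref{eq8}, \eqref{eq12}. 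The main obstacle, and the point deserving care, is bookkeeping the constant $c$: one has to confirm that imposing \eqref{eq.calibration} removes exactly the one-dimensional ambiguity in ${\mathcal N}_i$ (Proposition \ref{p.bidomain.null} shows the null-space of the uncalibrated problem is $\{c : c\in{\mathbb R}\}\oplus H^2_0(\Omega_m)$-worth of freedom, and \eqref{eq.add} with non-negative $Q$ kills the $H^2_0$ part while \eqref{eq.calibration} kills the constant), so that the combined map $(u_i,u_e)\leftrightarrow u_e$ is genuinely a bijection of solution sets and no spurious kernel or cokernel is introduced by the reduction.
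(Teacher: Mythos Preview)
Your proposal is correct and follows essentially the same route as the paper: decouple the system into the fourth-order Dirichlet problem \eqref{eq.Dirichlet.Laplacian.Q} for $u_e$ (Fredholm, and uniquely solvable when $Q$ is formally non-negative with real analytic coefficients, by Theorem~\ref{t.Dirichlet.M}) and the Neumann problem \eqref{eq7}, \eqref{eq11} for $u_i$ (Fredholm, and uniquely solvable under the calibration \eqref{eq.calibration} once the compatibility \eqref{eq.Green.M.Neumann} is checked as in the proof of Theorem~\ref{t.bidomain.exists}). Your write-up is more explicit than the paper's about the bookkeeping of the constant $c$ and the bijection between solution sets; one small inaccuracy is the phrase ``completed to a Dirichlet quadruple'' --- for the Dirichlet problem for a fourth-order operator you need only the Dirichlet \emph{pair} $\{1,\partial_{\nu,M_e}\}$, the quadruple in the proof of Theorem~\ref{t.bidomain.exists} appears only in the Green-function representation \eqref{eq.sol.ue}, not in the formulation of \eqref{eq.Dirichlet.Laplacian.Q}.
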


\begin{proof}  Under the hypothesis of this corollary both Dirichlet problem 
 \eqref{eq9}, \eqref{eq10}, \eqref{eq.add}, see, for instance, \cite{Roit96}  and 
Neumann problem \eqref{eq7}, \eqref{eq11}, see, 
for instance, \cite{Simanca1987}, have Fredholm property 
in the relevant Sobolev spaces. Hence the first part of the 
statement of the corollary is proved. 

If we additionally assume that  
$Q$ is a fourth order formally non-negative strongly elliptic operator 
with real analytic coefficients over $\overline \Omega_m$, then,
given vector $g \in H^{-2} (\Omega_m)$,  
Dirichlet problem  \eqref{eq9}, \eqref{eq10}, \eqref{eq.add} has one and only 
one solution $u_e  \in H^{2} (\Omega_m)$.  Moreover, as we have seen in the proof 
of Theorem \ref{t.bidomain.exists}, under calibration condition 
\eqref{eq.calibration}, 
Neumann problem \eqref{eq7}, \eqref{eq11} 
is uniquely solvable, too. Thus, problem 
\eqref{eq7}, \eqref{eq9}, \eqref{eq10}, \eqref{eq11}, 
\eqref{eq.calibration}, \eqref{eq.add} has 
one and only one solution $(u_i,u_e)\in H^{2} (\Omega_m) \times 
H^{2} (\Omega_m)$.
\end{proof}

Of course, the suggestion to add equation \eqref{eq.add} to the 
steady  bidomain model is purely mathematical. 
However, as Maxwell's Electrodynamics Theory shows, often 
a purely mathematical proposal leads to a full solution in Natural Sciences.
Thus  we are just informing the scientific community about the corresponding possibility. 
Let us give an instructive example
illustrating that there is not so much hope that this can  improve essentially the bidomain 
model in a general situation.  Though,  one may hope to construct such an equation 
using specific information on the cardiac tissues or even in a patient specific manner. 
%Thus, taking into the account all these considerations we pass to the evolutionary bidomain 
%model.

\begin{example} 
Consider Problem \ref{pr.inverse.inside} 
in the situation where assumptions \eqref{eq.prop} 
and  \eqref{eq.scalar} are fulfilled. Next 
we assume that the function $f$ in \eqref{eq6} does not depend on $t$, 
calibration condition \eqref{eq.calibration} is fulfilled and  
that the following electrodynamic relation holds true for steady currents:
\begin{equation*} 
\Delta u  = - \frac{q}{\varepsilon \varepsilon_0}
\end{equation*}
where $q$ is the density of electric charges, 
$u$ is the potential the electric field and $\varepsilon \varepsilon_0>0$ is the  
dielectric constant of the medium. 
In particular, for the potentials $u_i$, $u_e$ we obtain
\begin{equation} \label{eq.current.2}
\Delta u_i  = - \frac{q_i}{\varepsilon \varepsilon_0}, \,\, 
\Delta u_e  = - \frac{q_e}{\varepsilon \varepsilon_0}.
\end{equation}

Hence, substituting \eqref{eq.current.2} into \eqref{eq.balance.current.i} and 
\eqref{eq.balance.current.e} we obtain formulas that can be useful if we need to transform 
evolutionary equations to stationary ones:
\begin{equation*} %\label{eq.current.4i}
\frac{\partial \Delta u_i} {\partial t}  =
- \frac{1}{\varepsilon \varepsilon_0} \Big( \sigma _i \Delta u_i -\chi I_{\rm ion} \Big), 
\end{equation*}
\begin{equation*} %\label{eq.current.4e}
\frac{\partial \Delta u_e }{\partial t}  =
- \frac{1}{\varepsilon \varepsilon_0} \Big( \sigma _e \Delta u_e +\chi I_{\rm ion} \Big).
\end{equation*}
Now, taking in account \eqref{eq7}, cable equation \eqref{eq.balance.cable.0} and 
\eqref{eq.sol.ui.prop} (where, obviously, $\lambda=\frac{\sigma_e}{\sigma_i}$) 
we obtain the following chain of  equations in the sense of distributions in 
$ \Omega_m \times (0,T)$:
\begin{equation*} 
-\frac{\sigma_e}{\chi} \Delta^2 u _e = 
 C_m \frac{\partial \Delta (u_i-u_e)}{ \partial t} + \Delta I_{\rm ion} =
\end{equation*}
\begin{equation*}
- \frac{C_m(\sigma_e+\sigma_i)}{\sigma_i } \frac{\partial \Delta u_e}{ \partial t} +
 \Delta I_{\rm ion} =
 \frac{C_m(\sigma_e+\sigma_i)}{\sigma_i \varepsilon \varepsilon_0} 
\Big(\sigma_e\Delta u_e + I_{\rm ion} \Big) +  \Delta I_{\rm ion} =
\end{equation*}
\begin{equation*}
 \frac{C_m(\sigma_e+\sigma_i)}{\sigma_i \varepsilon \varepsilon_0} 
\Big(\sigma_e\Delta u_e + I_{\rm ion} \Big) + 
 \Delta I_{\rm ion} 
\end{equation*}
and, similarly, 
\begin{equation*} 
\frac{\sigma_i}{\chi} \Delta^2 u _i = - \frac{C_m(\sigma_e+\sigma_i)}{\sigma_e 
\varepsilon \varepsilon_0} \Big(\sigma_i\Delta u_i - I_{\rm ion} \Big)+
 \Delta I_{\rm ion} .
\end{equation*}

Therefore
\begin{equation} \label{eq.balance.cable.00.e}
\frac{\sigma_i \sigma_e \varepsilon \varepsilon_0}{\sigma_e+\sigma_i} 
\Delta^2 u _e = 
- \chi C_m \Big(  \sigma_e\Delta u_e + I_{\rm ion}\Big) -
\frac{\chi \sigma_i \sigma_e \varepsilon \varepsilon_0}{\sigma_e+\sigma_i} 
\Delta I_{\rm ion} ,
\end{equation}
\begin{equation*} %\label{eq.balance.cable.00.i}
\frac{\sigma_i \sigma_e \varepsilon \varepsilon_0}{\sigma_e+\sigma_i} 
\Delta^2 u _i = 
- \chi C_m \Big(  \sigma_i\Delta u_i + I_{\rm ion}\Big) + 
\frac{\chi \sigma_i \sigma_e \varepsilon \varepsilon_0}{\sigma_e+\sigma_i} 
\Delta I_{\rm ion}.
\end{equation*}
If we are to stay within the framework of linear theory we may 
assume that the ionic current is given by 
\begin{equation} \label{eq.Iion.lin.steady}
I_{\rm ion} (v) = \sum_{j=1}^n a_j \partial_j v + a_0 v + b  
\end{equation} 
with some function  $b \in L^2 (\Omega)$, and some  constants $a_j$, $0\leq j \leq n$.  
Then, as the operator $\Delta^2$ is strongly elliptic, using \eqref{eq.sol.ui.prop} (where, 
obviously, $\lambda=\frac{\sigma_e}{\sigma_i}$) and 
\eqref{eq.balance.cable.00.e} we arrive 
at the fourth order strongly elliptic equation
\begin{equation} \label{eq.add.prop}
\frac{\sigma_i \sigma_e \varepsilon \varepsilon_0}{\sigma_e+\sigma_i} 
\Delta^2 u _e + \chi C_m 	\sigma_e \Delta u_e +
	\frac{C_m(\sigma_e+\sigma_i)}{\sigma_i } -
\end{equation}
$$
 \chi \sigma_e \varepsilon \varepsilon_0 \Delta I_{\rm ion}(u_e) 
= - \frac{\sigma_e}{\sigma_i}\chi C_m  I_{\rm ion} 
({\mathcal N}_i (0, \nu_i \cdot (M_b \nabla u _b))  .
$$
In general, there is little hope that Dirichlet problem  
\eqref{eq.add.prop}, \eqref{eq9}, \eqref{eq10} is uniquely solvable 
because the coefficient $\varepsilon \varepsilon_0 $ is practically 
very small. Hence we may grant the Fredholm property only 
for problem \eqref{eq7}, \eqref{eq9}, \eqref{eq10}, \eqref{eq11}, \eqref{eq.add.prop} 
even under calibration assumption \eqref{eq.calibration}.
However the Fredholm property for a problem is not always the desirable result 
in applications because of the possible lack of the uniqueness and possible absence of 
solutions. As the index (the difference between the 
dimensions of its  kernel and co-kernel) of the Dirichlet problem in the standard setting 
equals to zero, the lack of uniqueness immediately implies some 
necessary solvability conditions applied to the operator in the left 
hand side of \eqref{eq.add.prop}

Moreover, as the coefficient 
$ \frac{\sigma_i \sigma_e \varepsilon \varepsilon_0}{\sigma_e+\sigma_i} $ 
is practically  small, there might be difficulties with numerical 
solving Dirichlet problem  \eqref{eq9}, \eqref{eq10}, 
\eqref{eq.add.prop}. 

Finally, we note that in the practical models of the electrocardiography the term 
$I_{\rm ion} (v,x,t)$ is usually non-linear with respect to $v$. For general non-linear 
Fredholm problems one may provide under reasonable assumptions a discrete set of solutions 
only, see \cite{Sm65} for the second order elliptic operators in H\"older spaces.
Thus one should specify the type of the non-linearities under the consideration.
For example, in the models of the Cardiology the non-linear term is often taken as 
a polynomial of second or third order with respect to $v$, see, for instance, 
\cite{AP96}, \cite{2}, though these choices do not fully correspond to the 
real processes in the myocardium. 
\end{example}

\section{An evolutionary bidomain model}
\label{s.bidiomain.t}

We recall that the primary equations \eqref{eq.balance.charge}, \eqref{eq.balance.current.i}, 
\eqref{eq.balance.current.e}, \eqref{eq.balance.cable.0}, leading to the steady 
bidomain model are actually evolutionary. That is why in this section 
we consider an evolutionary version of the bidomain model adding the  
time variable $t\in [0,T]$, 
\begin{align}
\Delta_i u_i + \Delta_e  u_e  = 0 \mbox{ in } \Omega_m \times [0,T], \label{eq7.t} \\
\Delta_b  u_b  = 0\mbox{ in } \Omega_b \times [0,T],\label{eq8.t} \\
	u_e  = u_b   \mbox{ on } \partial \Omega_m \times [0,T], \label{eq9.t} \\
	\nu_i \cdot (M_e \nabla u_e ) 
	= - \nu_e \cdot (M_b \nabla u_b)\mbox{ on }  \partial \Omega_m  \times [0,T],
	\label{eq10.t} \\
	\nu_i \cdot (M_i \nabla u_i)  =0 \mbox{ on } \Omega_m \times [0,T],  \label{eq11.t} \\
	\nu_e \cdot (M_b \nabla u_b)  = 0 \mbox{ on } \Omega \times [0,T],  \label{eq12.t} \\
		u_b = f \mbox{ on }  \partial \Omega \times [0,T], \label{eq6.t}\\
\label{eq.balance.cable}
\frac{1}{2\chi}\Big(-\Delta_i u _i + \Delta_e u _e\Big) = 
 C_m \frac{\partial (u_i-u_e)}{ \partial t} + I_{\rm ion} 
\mbox{ in } \Omega_m \times (0,T),
\end{align}
with a given function $f (x,t)$ and supplements it with an evolutionary equation in the 
cylinder domain $\Omega_m \times (0,T)$. 

As before, it is reasonable to supplement the model 
with a modified calibration assumption: there is a function 
$c_0 (t) \in C[0,T]$ such that 
\begin{equation} \label{eq.calibration.t}
\int_{\partial \Omega_m} (u_i (y,t) + c_0 (t) u_e (y,t)) 
d\sigma (y) =0 \mbox{ for almost all }  
t\in [0,T].
\end{equation}

\begin{problem} \label{pr.inverse.inside.t}
Given the value $f\in L^2 ([0,T],H^{3/2} (\partial \Omega_m))$ of electrical 
potentials on the boundary of the body domain, find, if possible, 
intracellular potential $u_i \in H^{2,1} (\Omega_m \times (0,T))$, 
extracellular potential $u_e\in H^{2,1} (\Omega_m \times (0,T))$  
and extracardiac potential $u_b \in H^{2,1} (\Omega_b \times (0,T))$ 
satisfying equations \eqref{eq7.t}, \eqref{eq8.t}, \eqref{eq.balance.cable}  and  
boundary conditions \eqref{eq9.t}-\eqref{eq6.t}.
\end{problem}

The further developments essentially depend on the structure 
of the current $I_{\rm ion}$.  We continue the discussion with 
the simple linear case considered in Example \ref{ex.proportional}.

\begin{theorem} \label{t.bidomain.null.t.prop}
Let the coefficients $M_i$, $M_e$, $M_b$ and $a_j$, $0\leq l \leq n$, 
be real analytic over ${\mathbb R}^n$ and bounded at the infinity. Let also 
$\Delta_e = \lambda \Delta_i $ with some $\lambda>0$ and 
\eqref{eq.calibration.t} hold true. 
If  \begin{equation} \label{eq.Iion.lin}
I_{\rm ion} (v) = \sum_{j=1}^n a_j (x)\partial_j v + a_0 (x)v + g  
\end{equation} 
with some function  $g \in L^2 (\Omega_T)$ and some constants 
$a_j$, $0\leq j \leq n$, then Problem \ref{pr.inverse.inside.t} has no more than one solution 
$(u_i, u_e, u_b)$ in the space 
$$
H^{2,1} (\Omega_m \times (0,T))  \times H^{2,1} (\Omega_m \times (0,T)) \times 
 H^{2,1} (\Omega_b \times (0,T)).
$$   
\end{theorem}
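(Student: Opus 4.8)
The plan is to establish uniqueness by proving that the null-space of Problem \ref{pr.inverse.inside.t} is trivial; this suffices because, once $I_{\rm ion}$ has the linear form \eqref{eq.Iion.lin}, the whole system \eqref{eq7.t}--\eqref{eq.balance.cable} together with \eqref{eq.calibration.t} is linear in the triple $(u_i,u_e,u_b)$. So I assume that $(u_i,u_e,u_b)$ lies in the indicated space, solves \eqref{eq7.t}--\eqref{eq.balance.cable} with $f\equiv 0$ and $g\equiv 0$, and satisfies \eqref{eq.calibration.t}; the goal is $u_i\equiv u_e\equiv u_b\equiv 0$.

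The first step is to freeze the time variable and argue essentially as in Proposition \ref{p.bidomain.null} and Example \ref{ex.proportional}. For almost every $t\in[0,T]$ the slice $u_b(\cdot,t)\in H^2(\Omega_b)$ solves $\Delta_b u_b(\cdot,t)=0$ in $\Omega_b$ with vanishing Cauchy data on the outer component $\partial\Omega$ of $\partial\Omega_b$, namely $u_b(\cdot,t)=0$ and $\nu_e\cdot(M_b\nabla u_b(\cdot,t))=0$ by \eqref{eq12.t} and \eqref{eq6.t} with $f\equiv 0$. Since $\partial\Omega$ is a relatively open subset of $\partial\Omega_b$ with smooth (indeed empty) boundary, Theorem \ref{t.Cauchy.M} gives $u_b(\cdot,t)\equiv 0$ and hence $u_b\equiv 0$. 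Then \eqref{eq9.t} and \eqref{eq10.t} yield $u_e(\cdot,t)=0$ and $\nu_i\cdot(M_e\nabla u_e(\cdot,t))=0$ on $\partial\Omega_m$, so by \cite{HedbWolf1} one gets $u_e(\cdot,t)\in H^2_0(\Omega_m)$ for a.e.\ $t$. By \eqref{eq7.t} and \eqref{eq11.t}, for a.e.\ $t$ the function $u_i(\cdot,t)$ solves the interior Neumann problem for $\Delta_i$ with data $(-\Delta_e u_e(\cdot,t),0)$ (whose solvability condition \eqref{eq.Green.M.Neumann} holds because $u_e(\cdot,t)\in H^2_0(\Omega_m)$), so Theorem \ref{t.Neumann.M} together with $\Delta_e=\lambda\Delta_i$ and the identity $\mathcal{N}_i(\Delta_e u_e(\cdot,t),0)=\lambda u_e(\cdot,t)$ (valid since $u_e(\cdot,t)\in H^2_0(\Omega_m)$) gives
\[ u_i(\cdot,t)=-\lambda u_e(\cdot,t)+c(t)\quad\text{for some constant }c(t). \]
Inserting $u_e(\cdot,t)|_{\partial\Omega_m}=0$ into \eqref{eq.calibration.t} forces $c(t)\int_{\partial\Omega_m}d\sigma=0$, hence $c(t)\equiv 0$ and $u_i=-\lambda u_e$ in $\Omega_m\times(0,T)$.

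The second step uses the cable equation \eqref{eq.balance.cable}. With $u_i=-\lambda u_e$ one has $v=u_i-u_e=-(1+\lambda)u_e$ and, using $\Delta_e=\lambda\Delta_i$, $-\Delta_i u_i+\Delta_e u_e=2\lambda\Delta_i u_e$; thus \eqref{eq.balance.cable} with $I_{\rm ion}$ linear and homogeneous becomes, after division by $C_m(1+\lambda)>0$, the homogeneous parabolic equation $\mathcal{L}u_e=0$ in $\Omega_m\times(0,T)$, where $\mathcal{L}=\partial_t+\Delta_{\widetilde M}+\sum_{j=1}^n\widetilde a_j\partial_j+\widetilde a_0$ with $\widetilde M=\tfrac{\lambda}{\chi C_m(1+\lambda)}M_i$ and $\widetilde a_j=C_m^{-1}a_j$. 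Since $M_i$ satisfies \eqref{eq.M.pos}, so does $\widetilde M$, so $\mathcal{L}$ is parabolic, and since $\widetilde M$ and the $\widetilde a_j$ are real analytic and bounded at infinity $\mathcal{L}$ admits a fundamental solution, so the potentials preceding Theorem \ref{t.Cauchy.heat} are defined. Now $u_e\in H^{2,1}(\Omega_m\times(0,T))$ has vanishing lateral Cauchy data: $u_e=0$ and $\partial_{\nu,\widetilde M}u_e=0$ on $\partial\Omega_m\times[0,T]$ because $u_e(\cdot,t)\in H^2_0(\Omega_m)$ for a.e.\ $t$. Hence, taking $S=\partial\Omega_m$ (a relatively open subset of $\partial\Omega_m$ with smooth empty boundary) in Problem \ref{pr.Cauchy.heat}, Theorem \ref{t.Cauchy.heat} yields $u_e\equiv 0$, and therefore $u_i=-\lambda u_e\equiv 0$ and $u_b\equiv 0$, which is the desired triviality of the null-space.

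The step I expect to be the main obstacle is making the time-slicing in the first step fully rigorous, i.e.\ transferring the elliptic structural facts (the $H^2_0$-characterization of \cite{HedbWolf1}, the interior Neumann Theorem \ref{t.Neumann.M}, and the identity $\mathcal{N}_i(\Delta_e u_e,0)=\lambda u_e$) from a single slice $u_e(\cdot,t)$ to the whole function in $H^{2,1}(\Omega_m\times(0,T))$, so that the representation $u_i=-\lambda u_e$ holds in the anisotropic space and may legitimately be differentiated in $t$ inside \eqref{eq.balance.cable}. In particular one must check that $\Delta_e u_e\in L^2([0,T],L^2(\Omega_m))$ and that $t\mapsto\mathcal{N}_i(\Delta_e u_e(\cdot,t),0)$ is measurable with the correct norm bounds, which is where the precise mapping properties of the Neumann operator and the definition of $H^{2,1}$ must be combined.
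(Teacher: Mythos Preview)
Your proof is correct and follows essentially the same route as the paper: reduce to the null-space via linearity, apply the time-sliced version of Proposition~\ref{p.bidomain.null}/Example~\ref{ex.proportional} to get $u_b\equiv 0$, $u_e(\cdot,t)\in H^2_0(\Omega_m)$ and $u_i=-\lambda u_e$, then feed this into the cable equation to obtain a homogeneous parabolic equation ${\mathcal L}u_e=0$ with vanishing lateral Cauchy data.

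The only notable difference is in the final step. You invoke the uniqueness half of Theorem~\ref{t.Cauchy.heat} with $S=\partial\Omega_m$ as a black box; the paper instead reproves that uniqueness inline: it applies the parabolic Green formula (Lemma~\ref{l.Green.heat}) to $w=u_e$, observes that the boundary potentials $V_{\partial\Omega_m}$, $W_{\partial\Omega_m}$ and the volume potential $G_{\Omega_m}$ all vanish (because $w\in L^2([0,T],H^2_0(\Omega_m))$ and ${\mathcal L}w=0$), leaving $\chi_{\Omega_m\times(0,T)}\,w=I_{\Omega_m}(w(\cdot,0))$, and then uses real analyticity of the fundamental solution in $x$ to conclude $w\equiv 0$. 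Your version is cleaner given that Theorem~\ref{t.Cauchy.heat} is already available; the paper's version makes the mechanism (analytic continuation of the Poisson integral) visible. Your concern about the time-slicing regularity is legitimate but the paper treats it at exactly the same level of detail you do, simply citing Proposition~\ref{p.bidomain.null} for each $t$.
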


\begin{proof} Fix $f \in L^2 ([0,T],H^{3/2} (\partial \Omega))$ admitting a solution 
$u_b \in H^{2,1} (\Omega_b \times (0,T))$ to \eqref{eq8.t}, \eqref{eq12.t}, \eqref{eq6.t}. 
Let $(\hat u_i, \hat u_e, \hat u_b)$ and $(\tilde u_i, \tilde u_e, \tilde u_b)$  
be two solutions to Problem \ref{pr.inverse.inside.t}. Then
the vector 
$(w_i, w_e,w_b) = (\hat u_i, \hat u_e, \hat u_b) -  
(\tilde u_i, \tilde u_e, \tilde u_b) $ 
satisfies
\begin{align}
\Delta_i w_i + \Delta_e  w_e  = 0 \mbox{ in } \Omega_m \times [0,T], \label{eq7.t.null} \\
\Delta_b  w_b  = 0\mbox{ in } \Omega_b \times [0,T],\label{eq8.t.null} \\
	w_e  = w_b   \mbox{ on } \partial \Omega_m \times [0,T], \label{eq9.t.null} \\
	\nu_i \cdot (M_e \nabla w_e ) 
	= - \nu_e \cdot (M_b \nabla w_b)\mbox{ on }  \partial \Omega_m  \times [0,T],
	\label{eq10.t.null} \\
	\nu_i \cdot (M_i \nabla w_i)  =0 \mbox{ on } \Omega_m \times [0,T],  \label{eq11.t.null} \\
	\nu_e \cdot (M_b \nabla w_b)  = 0 \mbox{ on } \Omega \times [0,T],  \label{eq12.t.null} \\
		w_b = 0
		\mbox{ on }  \partial \Omega \times [0,T], \label{eq6.t.null}\\
		\frac{1}{2\chi}\Big(-\Delta_i w _i + \Delta_e w _e\Big) = 
 C_m \frac{\partial (w_i-w_e)}{ \partial t} + I_{\rm ion} (\hat u_i-\hat u_e) - 
I_{\rm ion} (\tilde u_i -\tilde u_e ) ,
\label{eq.balance.cable.0.null}
\end{align}
the last equation being satisfied in $\Omega \times (0,T)$.
Then by Proposition \ref{p.bidomain.null} we have
\begin{equation} \label{eq.nullspace.inside.t}
	\left\{ 
	\begin{array}{ccccc}
	w_b (x,t) & = & 0 & {\rm if} & (x,t)\in \Omega_b \times [0,T],\\ 
	w_e (x,t) & = & w & {\rm if} & (x,t)\in \Omega_m \times [0,T],\\
	 w_i  (x,t)& = & {\mathcal N}_i (-\Delta_e w (\cdot, t),0)(x) & {\rm if} & 
	(x,t)\in \Omega_m \times [0,T],\\ 
\end{array}
\right.
\end{equation}
where ${\mathcal N}_i $ is the Neumann operator related to $\Delta_i$ 
and $w $ is a function from the space $L^2 ([0,T], H^2_0 (\Omega_m)) \cap 
H^{2,1} (\Omega_m \times (0,T)) $ providing that cable equation
\eqref{eq.balance.cable.0.null} is fulfilled and calibration assumption 
\eqref{eq.calibration.t} holds true. 

Since $\Delta_e = \lambda \Delta_i $ with some $\lambda>0$, 
then, according to \eqref{eq.nullspace.inside.prop} and \eqref{eq.nullspace.inside.t}, 
we have
\begin{equation} \label{eq.nullspace.inside.t.prop}
	\left\{ 
	\begin{array}{ccccc}
	w_b (x,t) & = & 0 & {\rm if} & (x,t)\in \Omega_b \times [0,T],\\ 
	w_e (x,t) & = & w & {\rm if} & (x,t)\in \Omega_m \times [0,T],\\
	 w_i  (x,t)& = & -\lambda w & {\rm if} & 
	(x,t)\in \Omega_m \times [0,T],\\
\end{array}
\right.
\end{equation}
where $w $ is a function from 
$L^2 ([0,T], H^2_0 (\Omega_m)) \cap H^{2,1} (\Omega_m \times (0,T))$ 
satisfying the following reduced version of cable equation
\eqref{eq.balance.cable.0}: 
\begin{equation}
\label{eq.balance.cable.0.null.r}
 \chi C_m (\lambda+1) \frac{\partial w}{ \partial t} + 
  \Delta_e w = 
\chi \Big(I_{\rm ion} (\hat u_i-\hat u_e) - 
I_{\rm ion} (\tilde u_i -\tilde u_e ) \Big) 
\mbox{ in } \Omega_m \times (0,T).
\end{equation}
Clearly, 
\begin{equation*} %\label{eq.v.w}
\hat v - \tilde v= 
(\hat u_i -\hat u_e)-(\tilde u_i -\tilde u_e ) = w_i-w_e = - (\lambda+1)w, 
\end{equation*}
and then \eqref{eq.Iion.lin}, \eqref{eq.balance.cable.0.null.r} imply
\begin{equation*}
%\label{eq.balance.cable.0.null.rr}
\left\{ 
\begin{array}{lll}
\frac{\partial w}{ \partial t} +
[\chi C_m (\lambda+1)]^{-1} \Delta_e w + C_m^{-1} \Big(
\sum_{j=1}^n a_j \partial_j w + a_0 w \Big) =0 
\mbox{ in } \Omega_m \times (0,T),\\
	w  = 0   \mbox{ on } \partial \Omega_m \times [0,T], \\
	\nu_i \cdot (M_e \nabla w_e ) 
	=0\mbox{ on }  \partial \Omega_m  \times [0,T].\\
	\end{array}
\right.
\end{equation*}

Under the hypothesis of the theorem the parabolic differential operator 
\begin{equation} \label{eq.L}
{\mathcal L}=
 \frac{\partial }{ \partial t} + 
[\chi C_m (\lambda+1)]^{-1}  \Delta_e  + 
C_m^{-1} \Big( \sum_{j=1}^n a_j \partial_j  + a_0 \Big) 
\end{equation}
admits a fundamental solution $\Psi_{\mathcal L}$ and 
hence  the following 
so-called Green formula for the  parabolic operator ${\mathcal L}$ holds true.

\begin{lemma} \label{l.Green.heat} 
Assume that the parabolic differential operator ${\mathcal L}$ admits a fundamental solution 
$\Psi_{\mathcal L}$. Then for all $ T>0$ and all  $u \in H^{2,1} (\Omega_{T})$
the following formula holds:
\begin{equation}  \label{eq.Green.heat}
\left.
\begin{aligned}
u(x, t),\ (x,t)\in \Omega_{T}  \\
0,\ (x, t)\not\in \overline{\Omega_{T}} 
\end{aligned}
\right\} \! = \Big( 
I_{\Omega} (u)   + G_{\Omega} ({\mathcal L}u)  + 
V _{\partial \Omega} \left( 
\partial _{\nu,M}
u \right)  +  W_{\partial \Omega} (u) \Big) (x,t)  .
\end{equation}
\end{lemma}

\begin{proof}  See, for instance, \cite[Ch. 6, \S 12]{svesh}. or \cite[Theorem 2.4.8]{Tark95a} 
for more general linear operators admitting fundamental solutions.
\end{proof}

Taking into account Green formula \eqref{eq.Green.heat}, 
and the fact that $w\in L^2 ([0,T], H^2_0 (\Omega_m))
\cap H^{2,1} (\Omega_m \times (0,T))$ we conclude that 
\begin{equation} \label{eq.Green.heat.w}
\left.
\begin{aligned}
w(x, t),\ (x,t)\in \Omega _m\times (0,T)  \\
0,\ (x, t)\not\in \overline{\Omega_m} \times [0,T] 
\end{aligned}
\right\} \! =  
I_{\Omega_m} (w)(x,t)     .
\end{equation}
It is well known that the elliptic differential operator 
$$
[\chi C_m (\lambda+1)]^{-1}  \Delta_e  + 
C_m^{-1} \Big( \sum_{j=1}^n a_j \partial_j  + a_0 \Big) 
$$
can be reduced by a linear change of space variables to a strongly elliptic 
operator $\Delta_{\tilde M}$ with a positive matrix $\tilde M$. Hence 
the parabolic operator $\mathcal L$ can be reduced to 
the related operator ${\mathcal L}_{\tilde M}$. Thus,
taking in account Example \ref{ex.fs.heat} we may conclude that 
the fundamental solution  $\Psi_{\mathcal L} (x,t)$ is real analytic 
with respect to the space variable  $x$ for each $t>0$. 
In particular, this means that the potential $I_{\Omega} (u)(x,t)$ is real 
analytic with respect to $x$ for each $t>0$, too. However, according to 
\eqref{eq.Green.heat.w}, it equals to zero outside $\overline \Omega_T$.
Therefore it is identically zero for each $t>0$ and then $w\equiv 0$ in $\Omega _T$, 
cf. \cite{KuSh}, for the similar uniqueness theorem related to the heat equation
or \cite{PuSh} for more general parabolic operators. 

Finally, we see that $(w_i, w_e,w_b)=0$ 
because of \eqref{eq.nullspace.inside.t.prop}.
\end{proof}

As for the existence of the solution to Problem \ref{pr.inverse.inside.t}, 
formula \eqref{eq.sol.ui.prop} yields for the case of proportional Laplacians 
under calibration condition \eqref{eq.calibration.t}:
\begin{equation*} %\label{eq.sol.ui.prop.t}
u_i =\lambda \Big(-u_e + {\mathcal N}_i (0, \nu_i \cdot (M_b \nabla u _b)) + c(t)\Big),
\end{equation*}
where 
$$
c(t)=-c_0(t)\Big(\int_{\partial \Omega_m}  d\sigma (y) \Big)^{-1}
\int_{\partial \Omega_m}  u_b (y,t) d\sigma (y) .
$$
Then cable equation \eqref{eq.balance.cable} and  \eqref{eq9.t}, \eqref{eq10.t} 
lead us to the following non-standard Cauchy problem for parabolic operator \eqref{eq.L} 
with boundary conditions on the lateral  side of the cylinder domain $\Omega_T$:
\begin{equation}
 \label{eq.balance.cable.0.prop}
\left\{
\begin{array}{lll} 
{\mathcal L} u_e = F  &
\mbox{ in } & \Omega_m \times (0,T) ,\\
	u_e  = u_b  & \mbox{ on } & \partial \Omega_m \times [0,T], 	\\
	\nu_i \cdot (M_e \nabla u_e ) 
	= - \nu_e \cdot (M_b \nabla u_b) & \mbox{ on }  & \partial \Omega_m  \times [0,T],
\end{array}
\right.
\end{equation}
where 
\begin{equation*}
F=h (x,t)+\lambda \Big(\frac{\partial}{\partial t} + 
C_m^{-1} \Big( \sum_{j=1}^n a_j \partial_j  + a
_0 \Big) \Big) \Big(
{\mathcal N}_i (0, \nu_i \cdot (M_b \nabla u _b(\cdot,t)) )(x) +c(t)\Big)=
\end{equation*}
\begin{equation*}
h (x,t)+ \lambda \, {\mathcal L} \Big(
{\mathcal N}_i (0, \nu_i \cdot (M_b \nabla u _b (\cdot,t)) ) (x) + c(t)\Big)
\mbox{ in } \Omega_m \times (0,T) 
\end{equation*} 
because 
$$
\Delta_e c(t) = 0 \mbox{ in } \Omega_m \times (0,T) ,
$$
$$
\Delta_e {\mathcal N}_i (0, \nu_i \cdot (M_b \nabla u _b) ) = \lambda
\Delta_i {\mathcal N}_i (0, \nu_i \cdot (M_b \nabla u _b) ) =
0 \mbox{ in } \Omega_m \times (0,T) .
$$
Actually this problem might be ill-posed, see \cite{KuSh}, \cite{PuSh}. 
According to Theorem \ref{t.Cauchy.heat}, we have to check that 
the potential 
$$
\Big(G_{\Omega} (F) + V_{\partial \Omega_m} (- \nu_e \cdot (M_b \nabla u_b)) + 
 W_{\partial \Omega_m} ( u_b) \Big)^+
$$
from $\Omega_b $ to $\Omega$ as a solution $\mathcal F$ to the equation 
$$
{\mathcal L} \mathcal F =0 \mbox{ in }  \Omega \times (0,T).
$$

By Lemma \ref{l.Green.heat}, for $(x,t) \not \in \overline {\Omega_m}
\times [0,T]$ we have  
$$
G_{\Omega_m} (F) (x,t)= W_{\partial \Omega_m}\Big(
{\mathcal N}_i (0, \nu_i \cdot (M_b \nabla u _b (\cdot,\cdot)) )  + c(\cdot)\Big) (x,t)
\Big)  - 
$$
$$
\lambda \Big(
I_{\Omega_m}\Big(
{\mathcal N}_i (0, \nu_i \cdot (M_b \nabla u _b (\cdot,0)) )(x,t)  + c(0) \Big) + 
G_{\Omega_m} (h) (x,t).
$$

Finally, as in \S \ref{s.bidomain}, we note that in the practical models the term 
$I_{\rm ion} (v,x,t)$ is usually non-linear with respect to $v$. Thus, a 
uniqueness/existence theorems for Problem \ref{pr.inverse.inside.t} are closely related  to 
the uniqueness/existence theorems of solutions 
to a non-linear non-standard Cauchy problem for quasilinear parabolic equation 
that is similar to \eqref{eq.balance.cable.0.prop} but with a  non-linear term $F = F(u_e)$.

\section{Numerical results}
\label{s.numerical}

In this section we present numerical results to illustrate some mathematical approaches 
proposed in this paper, namely,  the methods for reconstruction of  transmembrane potentials 
on the myocardial surface of the cardiac chambers. The objectives of this 
study were: 1) to estimate the accuracy of reconstruction of the transmembrane potentials by 
the extracellular electrical potentials on the cardiac surface under assumptions of  
isotropic electrical conductivity of the extracellular, intracellular and extracardiac media; 
2) to estimate the accuracy of reconstruction of the transmembrane potentials by the electrical potentials measured on the human body surface under the same assumptions.

For this propose we performed numerical simulation of electrical activity of the ventricles 
of the human heart.  We used a methodology of cardiac modelling that was described in detail 
in \cite{Ushenin2021}.  Briefly, to obtain a realistic geometry of the torso and heart we 
utilized computed tomography (CT) data of a patient with the structurally normal heart. The 
CT data were taken from a dataset of work \cite{Ushenin2021}.  After segmentation of the 
heart ventricles and torso we generated a high resolution 3D tetrahedral mesh for the final element (FEM) computations.

To simulate cardiac electrical activity, we used the bidomain model 
(\eqref{eq7.t}-\eqref{eq12.t}, \eqref{eq.balance.cable}). We assigned the membrane 
capacitance $C_m= 1$  $\mu F/cm^2$ and the surface-to-volume ratio  $\chi= 400$  $cm^{-1}$.
We assume the torso to be an isotropic volume conductor with a scalar conductivity 
coefficient $m_b=7$ mS/cm and the myocardium to be an anisotropic volume conductor. 
Following \cite {Boulakia2010}, \cite{Clayton2010}, \cite{Keller2012}, the electrical 
conductivity tenzors were constructed as follows:
 $M_i = R\left( \begin{smallmatrix} \sigma_{li}&0&0\\ 0&\sigma_{ti}&0 \\ 0&0&\sigma_{ti} 
\end{smallmatrix} \right)R^{T}$ and $M_e = R\left( \begin{smallmatrix} \sigma_{le}&0&0\\ 0&
\sigma_{te}&0 \\ 0&0&\sigma_{te} \end{smallmatrix} \right)R^{T}$, 
where $\sigma_{li}$, $\sigma_{ti}$ are the intracellular conductivities in the longitudinal 
and transversal direction, $\sigma_{le}$, $\sigma_{te}$ are the extracellular}
conductivities in the longitudinal and transversal direction, $R$ is a matrix called a rotation basis. 

The rotation basis $R$ was defined according to the myocardial fiber orientations, which were 
determined in the myocardium volume by a rule-based approach (see \cite{Bayer2012} for 
details).  We used the following values of the conductivities: $\sigma_{li}=12,\ 
\sigma_{ti}=1.33,\ \sigma_{le}=45,\ \sigma_{te}=5$  mS/cm. 
These conductivities were chosen to provide physiological values of the conduction velocity 
along the myocardial fibers (0.5-0.6 m/s) and  across ones (0.15-0.25 m/s) as well as 
realistic QRS magnitude and duration respect to the QRS properties of the real patient electrocardiogram in 
case of ectopic activation from the ventricle apex. 

We employed the TNNP 2006 cellular model for human ventricle cardiomyocytes 
\cite{ten2006alternans} as a basic model 
to compute the transmembrane ionic current $I_{\text{ion}}$.  Transmural and apico-basal 
cellular heterogeneity of the ionic channels properties was introduced in the model equations 
using the approaches proposed in \cite{ten2006alternans} and \cite{Keller2012}. 

The computations were performed with Oxford Cardiac Chaste software \cite{Cooper2020}. The 
time resolution of the simulated electrical signals was 1,000 frames per second.

We simulated three patterns of electrical excitation of the ventricles of the heart which 
were initiated by focal origins (electrical pacing). The origins were placed: 1) in the 
lateral wall of the left ventricle (LV); 2) in the apex of the heart (Apex); 3) in the right 
ventricle outflow tract (RVOT). 

For the next stage of the numerical experiments we created a medium resolution triangular 
mesh of the surfaces of the heart ventricles and torso for the boundary element (BEM) 
computations. The BEM mesh nodes coincided with a subset of the nodes of the FEM mesh on the 
cardiac and body surfaces. 

The values of the transmembrane potential and extracellular potential on the cardiac surface 
as well as electrical potential values on the surface of the torso obtained by the simulation 
were transferred from the FEM mesh nodes to the respective BEM mesh nodes. As a result, for 
all discrete time moments $t_p$  of the cardiocycle  we got vectors $v(x_j)$  of the 
transmebrane potential values and $u_e(x_j)$ of extracellular potential values in the BEM 
mesh nodes  $x_j\in \partial\Omega_m$ on the cardiac surface as well as a vector $u_b(x_i)$ 
of electrical potential values in the BEM mesh nodes  $x_i\in \partial\Omega$ on the body 
surface. The transmembrane potential values  $v(x_j)$, $x_j\in \partial\Omega_m$ were 
considered as a "ground truth" data.

Next, we recalculated transmembrane potential values  $v_r(x_j)$, $x_i\in \partial\Omega_m$ 
by $u_e(x_j)$ on $\partial\Omega_m$ and by $u_b(x_i)$ on $\partial\Omega$ under the 
assumption of isotropic electrical conductivity of the intracellular and extracellular media 
and compared them with the ground truth transmembrane potential values $v(x_j)$  on 
$\partial\Omega_m$.  For this propose conductivity tensors $M_i$, $M_e$ were approximated by 
scalar coefficients $m_i$, $m_e$ respectively. In this "proof of concept" study we used the 
simplest approach, assuming $m_i =\sigma_{li}$ and $m_e =\sigma_{le}$.

In general, we used the collocation version of BEM (see \cite{Kalinin2019})  for the recomputation 
of the transmembrane potential. 

The first evaluation protocol includes recalculation of the transmembrane potential values 
$v_r(x_j)$, $x_j\in \partial\Omega_m$ by extracellular potential value $u_e(x_j)$,  $x_j\in 
\partial\Omega_m$. The  reconstruction was based on formula 
\eqref{eq.sol.ui.prop}.  When the conductivities are isotropic it takes a form:
 $u_i =-\lambda u_e +\lambda {\mathcal N}_i (0,  m_b\cdot\nu_i \cdot \nabla u _b) +c$.

Actually, the reconstruction of the transmembrane potential included the follows steps:

\begin{enumerate}
\item[(a)] computation of normal derivative $\nu_i\cdot \nabla u _b$ of the body electrical 
potential $u _b$ on the myocardial surface $\partial\Omega_m$;  
\item[(b)] computation of intracellular potential $u_i$  on the myocardial surface 
$\partial\Omega_m$ as $u_i =-\lambda u_e +\lambda {\mathcal N}_i (0, m_b \cdot \nu_i \cdot 
\nabla u _b) +c$;
\item[(c)] calculation of transmembrane potential $v=u_i-u_e$ on $\partial \Omega_m$.
\end{enumerate}

Note that $\nu_i\cdot \nabla u _b = -\nu_e\cdot \nabla u _b$.  Taking in account \eqref{eq8}, 
\eqref{eq9} and \eqref{eq9} $\nu_e\cdot \nabla u _b$ can be computed as the external normal 
derivative of solution $u_b$ to the Zaremba problem for the Laplace equation:

\begin{equation*} 
\left\{ \begin{array}{lll}
\Delta u_b =0 & {\rm in} & \Omega_b,  \\
u _b = u_e   & {\rm on} & \partial \Omega_m,\\   
\partial _{\nu } u = 0  & {\rm on} & \partial \Omega.\\
\end{array}
\right.
\end{equation*}

For this computation we used a BEM approach given in \cite{Kalinin2019} (formula (11)). 

The Neumann-to-Dirichlet transform ${\mathcal N}_i (0, m_b \cdot \nu_i \cdot \nabla u _b)$ can be computed as the trace  on $\partial\Omega_m$ of the solution to the Neumann problem for the Laplace equation:

\begin{equation*} 
\left\{ \begin{array}{lll}
\Delta u_m =0 & {\rm in} & \Omega_m,  \\
\partial _{\nu } u = M_b \cdot \nu_i \cdot \nabla u _b   & {\rm on} & \partial \Omega_m.\\
\end{array}
\right.
\end{equation*}

We used a BEM realization of the Neumann-to-Dirichlet transform given in \cite{RS2007} 
(formula (1.26)); the calibration constant was defined by formula 
\eqref{eq.const}.

The second evaluation protocol includes recalculation of the transmembrane potential values $v_r(x_j)$, $x_j\in \partial\Omega_m$ by electrical potential value $u_b(x_i)$,  $x_i\in \partial\Omega$.

It consists of two steps: 

a) computation of $u _b = u _e$ on  $\partial\Omega_m$ by solving the Cauchy problem for the Laplace equation 

\begin{equation*} 
\left\{ \begin{array}{lll}
\Delta u_b =0 & {\rm in} & \Omega_b,  \\
u _b = f   & {\rm on} & \partial \Omega,\\   
\partial _{\nu } u = 0  & {\rm on} & \partial \Omega.\\
\end{array}
\right.
\end{equation*}

 b) subsequent computations according to the the first evaluation protocol. 

For solving the Cauchy problem we implemented a method similar to one provided by Theorem 
\ref{t.Cauchy.M}. More precisely, we used its BEM realization 
(including the Tikhonov regularization) from paper \cite{Kalinin2019} (formulas (35)-(36)). 
The regularization parameter for the Tikhonov method was obtained by the 
$L$-curve approach. 

To compare the reconstructed transmembrane potentials with the ground truth ones we 
calculated the root mean square error $\delta (v_{j,t}, \hat{v}_{j,t})$:

$\delta = \sqrt{\frac{1}{NT} \sum_{p=1}^{T}\sum_{j=1}^{N} (v_r(x_j, t_p) - v(x_j, t_p) )^2}$,

where  $N$ is the number of the mesh nodes on the cardiac surface, $T$ is the number of  
discrete time points of the cardiac cycle, $v_r$ is the reconstructed transmembrane 
potential,  $v$ is the ground truth transmembrane potential. 

Results of the first evaluation protocol are presented in 
Table 1.  Figure 1 display 
distributions of the reference and the reconstructed trancmembrane potential of the ventricle surface at three consecutive time moments of their depolarization. The comparison of the transmenbrane potential signals in the selected point on the cardiac surface is shown on Figure 2.

\begin{table}[]
	\caption{Root mean square error $\delta$  between the reconstructed transmembrane potential and the ground truth one.}\label{tbl:num_results}
	\begin{tabular}{r|cc}
		 & {$u_e \rightarrow v$} & {$u_b \rightarrow u_e \rightarrow v$} \\ \hline
		LV APEX              & 12.56 mV                                   & 13.84  mV                                                 \\
		LV                      & 5.71 mV                                     & 19.81 mV                                                  \\
		RVOT                 & 13.71 mV                                    & 16.89  mV                                                
	\end{tabular}
\end{table}

\begin{figure}[t] 
	\begin{center}
		\includegraphics[width=0.98\textwidth]{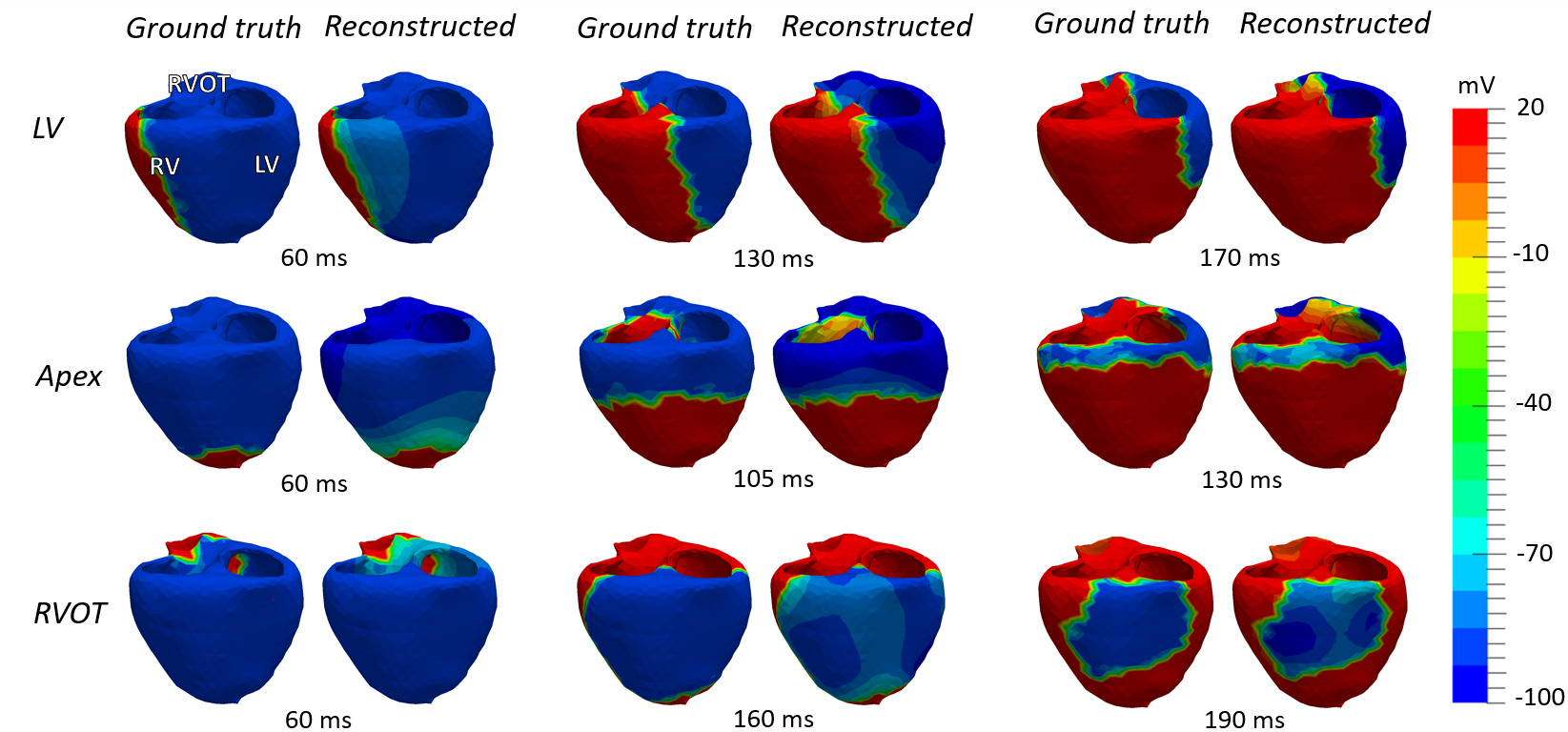}
		\caption{The ground truth transmembrane potential and the reconstructed transmembrane 
		potentials recomputed by the extracellular potential under the isotropic 
		assumptions. LV, 
		the left ventricle; RV, the right ventricle; RVOT the right ventricle outflow tract.}
		\label{fig:img1}
	\end{center}
\end{figure}

\begin{figure}[t] 
	\begin{center}
		\includegraphics[width=0.98\textwidth]{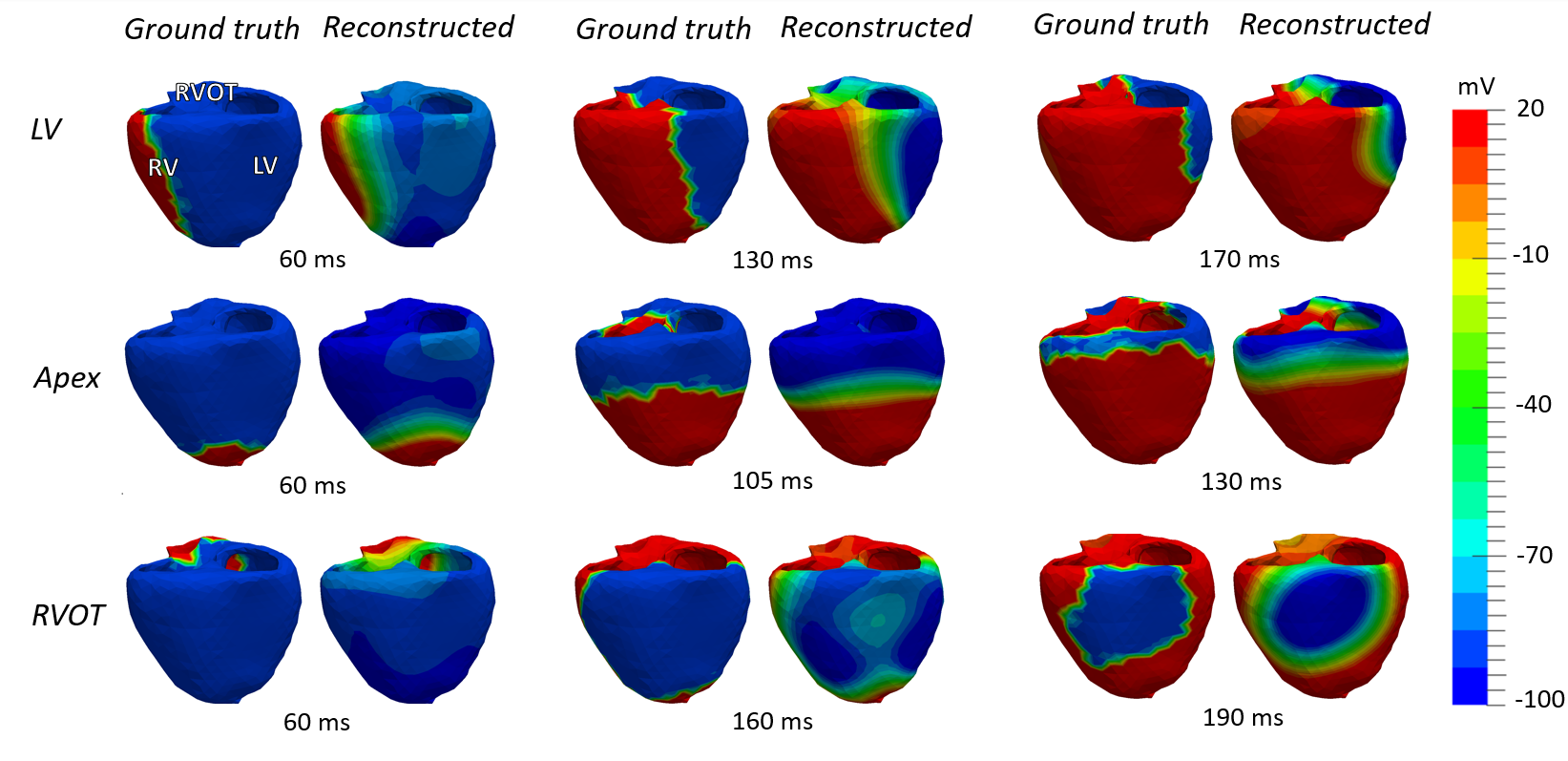}
		\caption{The ground truth transmembrane potential and the reconstructed transmembrane 
		potentials recomputed by the electrical potential on the body surface under the isotropic 
		assumptions. LV, the left ventricle; 
		RV, the right ventricle; RVOT the right ventricle 
		outflow tract.}\label{fig:img2}
	\end{center}
\end{figure}

\begin{figure}[t] 
	\begin{center}
		\includegraphics[width=0.98\textwidth]{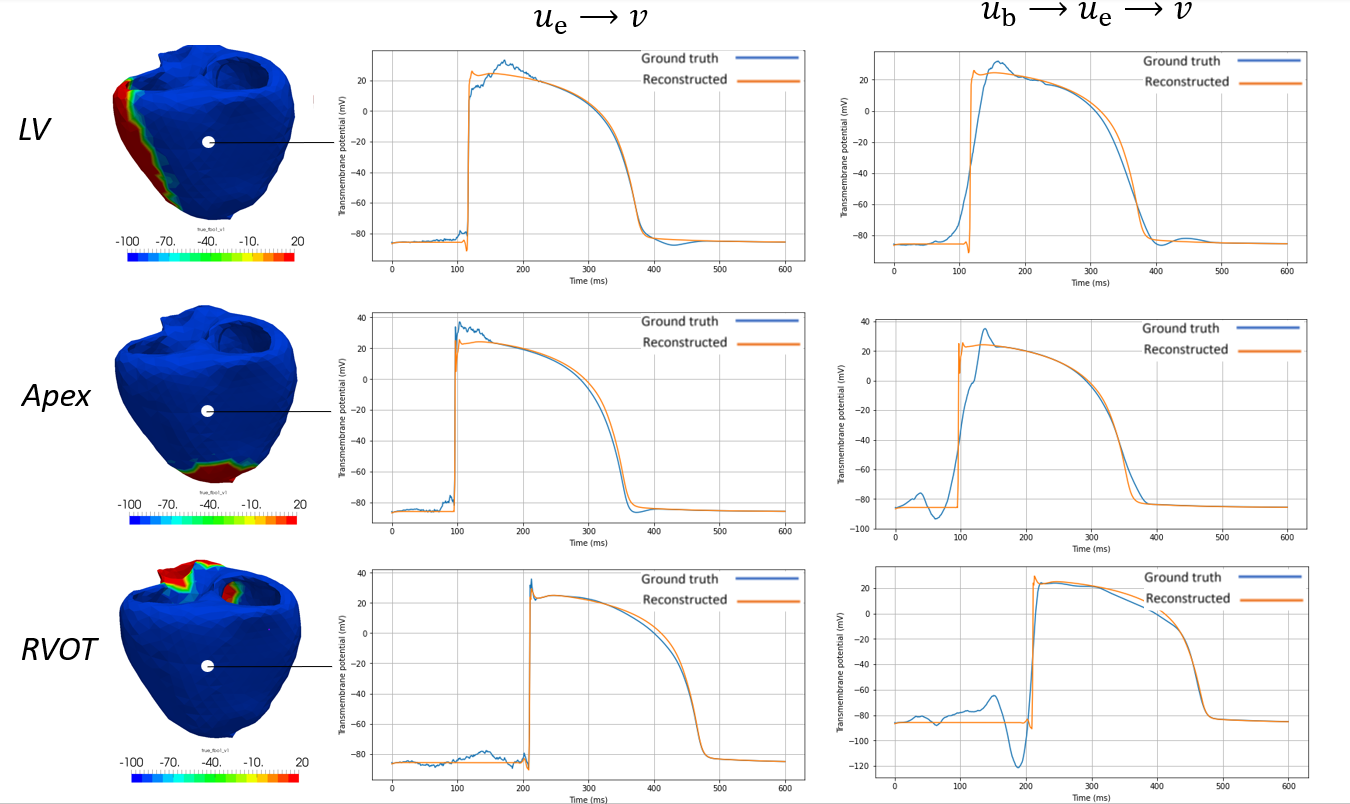}
		\caption{Examples of the ground truth and reconstructed transmembrane potential signals in 
		a point in the center of the posterior-lower region of the ventricles of the heart. }
		\label{fig:img3}
	\end{center}
\end{figure}

This results shows the possibility of a sufficiently accurate reconstruction of the 
transmembrane potential based on the extracellular potential on the cardiac surface under the 
assumption of isotropic intracellular and extracellular electrical conductivity. The maximum 
reconstruction errors were observed in the vicinity of the spike of the transmembrane 
potential signal, while the up-stroke of the signal was reconstructed with high accuracy. 
Probably the precision of the reconstruction can be improved by using more optimal values for 
$m_i$ and $m_e$. 

Results of the second evaluation protocol are presented in Tble 1 and shown in Figures 2 and 3.  As expected, the accuracy of the reconstruction of the transmembrane 
potential was less than in the previous case. At the same time,  the reconstructed 
transmebrane potential correctly conveys the sequence of the myocardial activation and the 
basic shape of the exact transmembrane potential signals.

The main component of the solution distorsion was the smoothness of the activation front and, 
accordingly, the upstroke of the transmembrane signals. Such pattern of the reconstructed 
solution is typical for the Tikhonov regularization. This fact suggests that the application 
of more advanced regularization algorithms. For investigation of the regularisation methods  
a theory of bases with double orthogonality in the Cauchy problem for elliptic operators  
(see \cite{ShTaLMS}) can be useful.

\section{Discussion and conclusion}
\label{s.discussion}

Currently, methods for computational reconstruction of electrical activity of the heart 
inside the myocardium are being intensively developed based on the numerical solution of 
inverse problems for the bidomain model in various statements. This raises an important 
question about the theoretical limit of researchers' endeavors in this direction. In 
particular, the established uniqueness theorems for the inverse problems are very important 
because it provides the basis for numerical computations. In contrast to the “forward” 
initial-boundary value problem for the bidomain equations the uniqueness of the solution of 
inverse problems has not been sufficiently studied. In this work, we aimed to eliminate this 
gap and provide some mathematical background for both the facts that are well adopted 
in the engineering community and some new ideas providing a substantial progress in 
computations.
 
The non-uniqueness of the solution of the inverse problem of reconstruction of the 
transmembrane potential inside the myocardium for the second-order elliptic equation of the 
bidomain model were shown in several previous works \cite {Nielsen2007},\cite{1},\cite{2K}. 
In this paper, we generalized these results by presenting a complete description of the 
null-space of the problem for the case of anisotropic electrical conductivity (Proposition 
\ref{p.bidomain.null}). As a consequence, we also showed the uniqueness of the reconstruction 
of the action potential on the surface of the myocardium 
from the known electrical potential on the surface of the body.

Note that the electrical 
activity of the heart, even on its surface, provides valuable electrophysiological 
information about the patterns of cardiac excitation and the mechanisms 
of cardiac arrhythmias. In contrast to the electrical potential, the transmembrane potential 
more accurately characterizes the local electrical activity of the myocardium, especially the 
processes of myocardial repolarization. Therefore, the reconstruction of the transmembrane 
potential on the surface of the heart, the feasibility of which was justified in this 
article, can be useful for medical applications.

We illustrated the method for reconstruction of the transmembrane potential on the myocardial 
surface by the numerical experiments using the data of personalised modelling of electrical 
activity of the human heart ventricles. The reconstruction method were robust with respect to 
the model error associated with the "isotropic" approximation of tensors of the 
extracellular and intracellular electrical conductivity.

From mathematical point of view, Proposition \ref{p.bidomain.null} states that the steady part
of the bidomain model has too many degrees of freedom. This means that some
of the necessary information about the desired solution is missing. Some 
approaches to complete this information were proposed in \cite {Nielsen2007},\cite{1}, 
\cite{Ainseba2015}. 

In this paper we considered two other possible ways to ensure the uniqueness of
the solution of the problem. The first way consists of introducing the additional
fourth order strongly elliptic equation. We gave an example to show a fact that the forth 
order elliptic equation can be obtained by applying the continuity equation in 
electromagnetism to the bidomain equations. The second way is to consider the original 
evolutionary form of the bidomain model.

The consideration were performed under very restrictive assumptions. Namely, we used the "monodomain"{} assumption about of the proportionality of the electrical conductivity tensors and we utilized a linear version of the activation function of the bidomain model.

These simplifications are significant limitations 
of this study. However, some positive results on the uniqueness of the solution obtained for 
this highly simplified model show the prospects for further research in this direction. 

\smallskip

\textit{Acknowledgments\,} 
The second author was supported by the Russian Science Foundation,  grant N 20-11-20117.

\end{document}